\newcommand\Ex{{\mathbb E}}
\newcommand\Prob{{\mathbb P}}
\newcommand\Normal{{\mathcal N}}
\newcommand\X{{\mathbb X}}
\newcommand\N{{\mathbb N}}
\newcommand\R{{\mathbb R}}
\newcommand\C{{\mathbb C}}
\newcommand\x{{\mathbf x}}
\newcommand\dto{\overset{d}{\to }}
\newcommand\Pto{\overset{\Prob}{\to}}
\newcommand\norm[1]{\|#1\|}
\newcommand\bra[1]{\langle #1 \rangle}
\newcommand\bralr[1]{\left\langle #1 \right\rangle}
\newtheorem{theorem}{Theorem}[section]
\newtheorem{corollary}[theorem]{Corollary}
\newtheorem{lemma}[theorem]{Lemma}
\theoremstyle{definition}
\theoremstyle{remark}
\newtheorem{remark}[theorem]{Remark}
\title{Limit theorems for moment processes of beta Dyson's Brownian motions and beta Laguerre processes}
\author{
Fumihiko Nakano\footnote{Mathematical Institute, Tohoku University, Sendai,  Japan.
\newline Email: fumihiko.nakano.e4@tohoku.ac.jp}
\and
Hoang Dung Trinh\footnote{Faculty of Mathematics Mechanics Informatics, University of Science, Vietnam National University, Hanoi, Vietnam.
\newline Email: thdung.hus@gmail.com} 
\and
Khanh Duy Trinh\footnote{Global Center for Science and Engineering, Waseda University, Japan.
\newline
Email: trinh@aoni.waseda.jp 
} 
}
\begin{document}

\maketitle
\begin{abstract}
In the regime where the parameter beta is proportional to the reciprocal of the system size, it is known that the empirical distribution of Gaussian beta ensembles (resp.\ beta Laguerre ensembles) converges weakly to a probability measure of associated Hermite polynomials (resp.\ associated Laguerre polynomials), almost surely. Gaussian fluctuations around the limit have been known as well. This paper aims to study a dynamical version of those results. More precisely, we study beta Dyson's Brownian motions and beta Laguerre processes and establish LLNs and CLTs for their moment processes in the same regime.

\medskip
\noindent{\bf Keywords:}  Dyson's Brownian motion ; beta Laguerre process ; the moment method ; associated Hermite polynomials ; associated Laguerre polynomials
		
\medskip
	
\noindent{\bf AMS Subject Classification: } Primary 60B20 ; Secondary 60H05
\end{abstract}

\section{Introduction}
Gaussian beta ensembles, with a parameter $\beta > 0$, are one of the most studied random matrix models. They are generalizations of Gaussian orthogonal/unitary/symplectic ensembles in terms of the joint density of eigenvalues. A nice tridiagonal matrix model for them was constructed in \cite{DE02}. Based on that random matrix model, results on the limiting behavior of the empirical distribution, the edge scaling limit, the bulk scaling limit and characteristic polynomials have been established \cite{Zeitouni-2020, DE06, Duy-RIMS, Trinh-ojm-2018, Lambert-2020A, Ramirez-Rider-Virag-2011, Valko-Virag-2009}. It is worth mentioning that the joint density itself is good enough to solve problems such as the convergence to a limit, fluctuations around the limit and large deviation principles \cite{Anderson-book, Johansson98, Saff-Totik-book}.

When studying the limiting behavior of quantities of Gaussian beta ensembles as the system size $N$ tends to infinity, the parameter $\beta$ is usually assumed to be fixed. The case where $\beta$ varies as a function of $N$ has been considered recently. For example, in the regime where $\beta N \to 2c$, where $c \in (0, \infty)$ is some constant, with probability one, the empirical distribution of the eigenvalues converges weakly to a probability measure of associated Hermite polynomials interpolating a Gaussian distribution and the semi-circle distribution \cite{Allez12, Peche15, Nakano-Trinh-2018}. In the same regime, the bulk scaling limit is a homogeneous Poisson point process \cite{Peche15,Nakano-Trinh-2018, Nakano-Trinh-2020} and the edge scaling limit is the Gumbel distribution \cite{Lambert-2019}.

The aim of this paper is to study a dynamical version of the limiting behavior of the empirical distribution in the regime where $\beta N \to 2c \in (0, \infty)$. Let us first introduce related results. We concern with (scaled) Gaussian beta ensembles of ordered eigenvalues whose joint density is of the form
\begin{equation}\label{GbE}
	\frac1{Z_{N, \beta}}\prod_{1\le i<j \le N} |\lambda_j - \lambda_i|^{\beta} \prod_{l=1}^N e^{-\frac{\lambda_l^2}2}, \quad (\lambda_1 \le  \dots \le \lambda_N),
\end{equation}
where $Z_{N, \beta}$ is  the normalizing constant. Let 
\[
	L_N = \frac{1}{N} \sum_{i=1}^N \delta_{\lambda_i},
\]
be the empirical distribution, where $\delta_\lambda$ denotes the Dirac measure. Then as $N \to \infty$ with $\beta N \to 2c \in (0, \infty)$, the sequence $\{L_N\}$ converges weakly to a probability measure $\nu_c$, almost surely \cite{Peche15, Nakano-Trinh-2018, Trinh-2019}. The density of $\nu_c$, for $c > 0$, is given by 
\begin{equation}\label{nuc}
	\nu_c (x) = \frac{e^{-x^2/2}}{\sqrt {2\pi} |\hat f_c(x)|^2} , \quad \hat f_c(x) = \sqrt{\frac c{\Gamma(c)}}  \int_0^\infty   t^{c-1} e^{-t^2/2} e^{ixt} dt .
\end{equation}
We remark here that the family $\nu_c$, for $c > -1$, appeared in \cite{Askey-Wimp-1984} as probability measures of associated Hermite polynomials. The convergence means that for any bounded continuous function $f$, 
\[
	\bra{L_N, f} = \frac1N \sum_{i=1}^N f(\lambda_i)\to \bra{\nu_c, f} \quad \text{as} \quad N \to \infty, \quad \text{almost surely.}
\]
Here $\bra{\mu, f}$ denotes the integral $\int f(x) d\mu(x)$ for a probability measure $\mu$ and an integrable function $f$. The above still holds when $f$ is a continuous function with some growth condition at infinity (e.g.\ polynomial growth). We refer to those convergences as the law of large numbers (LLN).
Fluctuations around the limit were already established \cite{Nakano-Trinh-2018, Trinh-2019}. For a continuously differentiable function $f$ whose derivative $f'$ is of polynomial growth, the following central limit theorem (CLT) holds. As $N \to \infty$, 
\begin{equation}\label{CLT-f}
	\sqrt{N} \Big(\bra{L_N, f} - \Ex[\bra{L_N, f}] \Big) \dto \Normal(0, \sigma_{f,c}^2),
\end{equation}
where $\sigma_{f, c}^2 \ge 0$ is the limiting variance and `$\dto$' denotes the convergence in distribution. CLTs are first established for polynomial test functions by a martingale approach based on the tridiagonal random matrix model. Then extending from polynomials to such differentiable functions is done by a standard method in random matrix theory. The limiting variance $\sigma_{f,c}^2$ when the test function $f(x)=x^n$ has been calculated by using stochastic analysis \cite{Spohn-2020} and loop equations \cite{Forrester-2021}.

Our object in this paper is the so-called beta Dyson's Brownian motions defined to be the strong solution of the following system of stochastic differential equations (SDEs)
\begin{equation}\label{Dyson}
\begin{cases}
	d\lambda_i(t)= d  b_i(t)  + \dfrac\beta2 \sum\limits_{j : j \neq i} \dfrac{1}{\lambda_i(t) - \lambda_j(t)} dt,\\
	\lambda_i(0) = 0,
\end{cases}
 i = 1, \dots, N,
\end{equation}
together with the constraint that 
$
(\lambda_1(t), \lambda_2(t), \dots, \lambda_N(t)) \in \Omega_G, (t > 0).
$
Here $\{b_i(t)\}_{i=1}^N$ are independent standard Brownian motions and 
\[
	\Omega_G = \{(\lambda_1,  \dots, \lambda_N) \in \R^N : \lambda_1 \le \cdots \le \lambda_N\}.
\]
For $\beta \ge 1$, the above system of SDEs has a unique strong solution and $\lambda_1(t), \dots, \lambda_N(t)$ never collide for $t > 0$ (see \cite[\S 4.3]{Anderson-book}, for example). Dyson~\cite{Dyson-1962} showed that the eigenvalue process of  symmetric (resp.\ Hermitian) matrix-valued processes of independent standard real (resp.\ complex) Brownian motions solve the above SDEs with $\beta = 1$ (resp.\ $\beta = 2$).
When $0< \beta <1$, the above SDEs were treated in \cite{Cepa-Lepingle-1997} by using multivalued SDEs. 
In this case, almost surely, the set of $t$ such that $\lambda_i(t) = \lambda_j(t)$ for some $i \neq j$ has Lebesgue measure zero. The process $\{\lambda_i(t)\}$ can also be explained as a type A radial Dunkl process, a Markov process on $\Omega_G$ with generator
\[
	L[f](\x) = \frac12 \sum_{i=1}^N \frac{\partial^2 f}{\partial x_i^2}(\x) + \frac\beta 2 \sum_{j < i} \frac{1}{x_i - x_j} \left( \frac{\partial f}{\partial x_i} -  \frac{\partial f}{\partial x_j}\right)(\x),  \quad \x = (x_1, \dots, x_N) \in \Omega_G,
\]
for suitable $f \in C^2(\Omega_G)$. It turns out that (see \cite{Rosler-Voit-1998}) for $t>0$, $\{\lambda_i(t) /\sqrt t\}$ is distributed according to the Gaussian beta ensemble~\eqref{GbE}.

We study the limiting behavior of the empirical measure process 
\[
	\mu_t^{(N)} = \frac1N \sum_{i=1}^N \delta_{\lambda_i(t)},
\]
through its moment processes 
\[
	S_n^{(N)} (t) = \bra{\mu_t^{(N)}, x^n} = \frac{1}{N}\sum_{i=1}^N \lambda_i(t)^n, \quad (n=0,1,2,\dots).
\]
For fixed $T>0$, regard $S_n^{(N)}$ as random elements on the space $C([0, T])$ of continuous functions on $[0, T]$ endowed with the supremum norm, we establish the LLN and CLT for each moment process in the regime where $\beta N \to 2c$. Precise statements are stated in Theorems~\ref{thm:Gaussian-LLN},~\ref{thm:joint-moment} and \ref{thm:Gaussian-CLT-orthogonal}. Since the Gaussian beta ensemble~\eqref{GbE} is the distribution of $\{\lambda_i(t)\}$ at time $t = 1$, results on the process level imply the following CLT.
\begin{theorem}\label{thm:main-intro}
	Let $\{p_n\}$ be associated Hermite polynomials (orthogonal with respect to $\nu_c$) which are defined recursively as 
\begin{align*}
	&p_0 = 1, \quad p_1 = xp_0 = x, \\
	&p_{n+1} = xp_n - (n+c)p_{n-1}, \quad (n \ge 1).
\end{align*}
Let $P_n$ be a primitive of $p_n$, that is, $P_n' = p_n$. Then as $N \to \infty$ with $\beta N = 2c$, 
\[
	\sqrt N \Big(\bra{L_N, P_n} - \bra{\nu_c, P_n} \Big) \dto \Normal(0, \sigma_{P_n, c}^2), 
\]
where 
\[
	\sigma_{P_n, c}^2  = \frac{1}{n+1} \int p_n(x) p_n(x) d\nu_c(x) = \frac{1}{n+1} (c+1) \cdots (c+n).
\]
Moreover, for any $M\in \N$, $\{\sqrt N (\bra{L_N, P_n} - \bra{\nu_c, P_n} )\}_{n=0}^M$ jointly converge in distribution to independent Gaussian random variables. 
\end{theorem}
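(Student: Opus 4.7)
The plan is to deduce Theorem~\ref{thm:main-intro} from the process-level CLT (Theorem~\ref{thm:Gaussian-CLT-orthogonal}) evaluated at the single time $t = 1$. By Rösler--Voit~\cite{Rosler-Voit-1998}, $(\lambda_1(1),\dots,\lambda_N(1))$ is distributed as the Gaussian beta ensemble~\eqref{GbE}, so $L_N(1)$ coincides with $L_N$ in the statement. Since each $P_n$ is a polynomial of degree $n+1$, expanding $P_n(x) = \sum_k a_k^{(n)} x^k$ expresses $\sqrt N(\bra{L_N, P_n} - \bra{\nu_c, P_n})$ as a fixed linear combination of the moment fluctuations $\sqrt N(\bra{L_N, x^k} - \bra{\nu_c, x^k})$, so joint Gaussian convergence of $\{\sqrt N(\bra{L_N, P_n} - \bra{\nu_c,P_n})\}_{n=0}^M$ follows from joint Gaussian convergence of the moment vector at $t=1$ (the replacement of $\Ex\bra{L_N, x^k}$ by $\bra{\nu_c, x^k}$ at scale $1/\sqrt N$ is handled by the moment estimates of \cite{Nakano-Trinh-2018, Trinh-2019}).

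The main content is the identification of the limiting covariance. Applying Itô's formula to $\bra{L_N(t), f}$ for polynomial $f$ and linearizing about the deterministic limit $\mu_t := (\sqrt t)_* \nu_c$, the fluctuation $\tilde\psi_f(t) := \lim_{N \to \infty} \sqrt N(\bra{L_N(t), f} - \bra{\mu_t, f})$ should satisfy the linear SDE
\[
d\tilde\psi_f(t) = dW_f(t) + \left[c\, \tilde\psi_{g_t^f}(t) + \tfrac12\, \tilde\psi_{f''}(t)\right] dt, \qquad g_t^f(x) = \int \frac{f'(x) - f'(y)}{x-y}\, d\mu_t(y),
\]
where $W_f$ is a centered Gaussian martingale with $\langle W_f, W_g\rangle_t = \int_0^t \bra{\mu_s, f' g'}\, ds$. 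For $f = P_n$, $g_t^{P_n}$ is a polynomial of degree $n-1$ in $x$, so the system for $\{\tilde\psi_{P_n}\}$ is triangular in $n$ and can be solved hierarchically: $\tilde\psi_{P_0} = W_{P_0}$ is a standard Brownian motion, and each higher $\tilde\psi_{P_n}(t)$ is $W_{P_n}(t)$ plus a time integral of lower-order $\tilde\psi_{P_k}$'s, with coefficients determined by the three-term recurrence $p_{n+1} = xp_n - (n+c)p_{n-1}$.

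The crux is then to compute the covariance of these integrated expressions at $t=1$ and verify that the martingale cross-covariation $\int_0^1 \bra{\mu_s, p_n p_m}\,ds$, together with the drift cross-terms, collapses to the diagonal form $\delta_{nm}\frac{1}{n+1}\|p_n\|_{L^2(\nu_c)}^2$. The main obstacle is this final algebraic identification: the martingale contribution alone is \emph{not} diagonal and does not have the correct magnitude (e.g., at $n = m = 2$ one finds $\int_0^1 \bra{\mu_s, p_2^2}\, ds = (c+1)(2c+3)/3$, off by $(c+1)^2/3$ from the target $(c+1)(c+2)/3$), so the drift must supply a precise nontrivial correction. Establishing this cancellation for all $(n,m)$ simultaneously should follow from an identity relating the three-term recurrence for $p_n$ to the moment recurrence of $\nu_c$, with the factor $\frac{1}{n+1}$ ultimately emerging from $\int_0^1 s^n\, ds$ once all degree-mixing in $s$ has been eliminated.
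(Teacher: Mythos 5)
Your overall framework is right---linearize the It\^o dynamics for $\bra{\mu_t^{(N)},f}$, apply the martingale CLT to the noise term, and exploit orthogonality of the $p_n$---but the proposal stalls exactly at the step you yourself flag as the crux, and that step is not a routine verification. With the \emph{static} test function $f=P_n(x)$, the drift term $c\,\tilde\psi_{g_t^f}+\tfrac12\tilde\psi_{f''}$ does not vanish, the martingale bracket $\int_0^1\bra{\mu_s,p_np_m}\,ds$ is (as your $n=m=2$ computation correctly shows) neither diagonal nor of the right size, and you are left asserting that an unspecified identity will make all the off-diagonal and excess terms cancel for every pair $(n,m)$. Nothing in the proposal establishes this, and solving the triangular hierarchy of linear SDEs explicitly and then computing all cross-covariances of the integrated drift corrections is a genuinely hard bookkeeping problem that you have not reduced to anything checkable. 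As written, the proof is incomplete at its central point.

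The paper avoids this entirely by a different choice of test function: it sets $P_n(t,x)=t^{(n+1)/2}P_n(x/\sqrt t)$, a \emph{time-dependent} self-similar rescaling of the primitive (which reduces to $P_n(x)$ at $t=1$, so the single-time statement is unaffected). Lemma~\ref{lem:Gaussian-vanishing} shows that for this choice
\[
	c \int \frac{P'_n(s, x) - P'_n(s, y)}{x - y}\, d\mu_s(y) + \tfrac12 P''_n + \partial_s P_n = \mathrm{const}\times s^{n/2},
\]
using the recurrences for $p_n$ and for the associated polynomials $q_n(x)=\int\frac{p_n(x)-p_n(y)}{x-y}d\nu_c(y)$; since the fluctuation field annihilates deterministic functions of $s$, the entire drift in \eqref{limit-f} drops out and $\bra{\xi,P_n}$ equals the pure martingale limit $\tilde\eta_n$. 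The bracket then becomes $\int_0^{t}u^{(n+m)/2}\bra{p_m,p_n}_{\nu_c}\,du$, which is diagonal by orthogonality with respect to $\nu_c$ (not $\mu_s$), and $\int_0^1 u^n\,du$ produces the factor $\tfrac1{n+1}$ immediately. If you want to complete your static-test-function route you would in effect have to reprove the content of that lemma in a less transparent coordinate system; the cleaner fix is to adopt the time-dependent rescaling from the start.
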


The above theorem is included in the statement of Theorem~\ref{thm:Gaussian-CLT-orthogonal}.
This result provides more information than the CLT~\eqref{CLT-f} mentioned above. Such type of CLT statements concerning with orthogonal polynomials has already been known for the three classical beta ensembles (Gaussian beta ensembles, beta Laguerre ensembles and beta Jacobi ensembles) with fixed $\beta$ \cite{Cabanal-French, Dumitriu-Paquette-2012}. This paper uses an idea of choosing orthogonal polynomials from \cite{Cabanal-French} but detailed arguments are totally different.
We establish analogous results on the Laguerre case as well. The Jacobi case which requires more works will be written in a forthcoming paper.

The paper is organized as follows. LLNs and CLTs for moment processes of the eigenvalue process \eqref{Dyson} (the Gaussian case) are established in Section 2. Then Section 3 deals with the Laguerre case by using similar arguments.

\section{Gaussian case}
In this section, we study the limiting behavior of the empirical measure process $\mu_t^{(N)}$ of beta Dyson's Brownian motions~\eqref{Dyson} when $\beta = 2c/N$, where  $c \in (0, \infty)$ is a given positive constant. In case  $\beta$ is fixed, the LLN and CLT for the empirical measure process of more general eigenvalue processes related to beta ensembles have been studied \cite{Rogers-Shi-1993,Unterberger-2018,Unterberger-2019}. In our considering regime, the LLN for $\mu_t^{(N)}$ is a special case of general results in \cite{Cepa-Lepingle-1997}. However, we are going to re-prove the LLN for $\mu_t^{(N)}$ by a moment method  developed in \cite{Trinh-Trinh-BLP} in the study of beta Laguerre processes.

For $f = f(t, x) \in C^2([0, \infty) \times \R)$, we write $\partial_t f = \partial f / \partial t, f' = \partial f / \partial x$ and $f''= \partial^2 f / \partial x^2$. Then  by It\^o's formula,
\begin{align}
	d\bra{\mu^{(N)}_t, f} &= \frac 1N \sum_{i=1}^N df(t, \lambda_i(t))\notag\\
	&= \frac1N\sum_{i=1}^N f'(t, \lambda_i(t)) d\lambda_i(t) + \frac1{2N} \sum_{i=1}^N f''(t, \lambda_i(t)) dt + \frac1N \sum_{i=1}^N\partial_t f(t, \lambda_i(t)) dt\notag\\
	&=\frac1N\sum_{i=1}^N f'(t, \lambda_i(t))  d b_i(t) + \frac{c}{2N^2}\sum_{i=1}^N \sum_{j \neq i} \frac{f'(t, \lambda_i(t))}{\lambda_i(t) - \lambda_j(t)} dt\notag\\
	&\quad+ \bra{\mu_t^{(N)}, \frac12 f'' + \partial_t f} dt\notag\\
	&=\frac{1}N\sum_{i=1}^N f'(t, \lambda_i(t))  d b_i(t)  + \frac c2 \iint \frac{f'(t, x) - f'(t, y)}{x - y}d\mu_t^{(N)}(x) d\mu_t^{(N)}(y)dt \notag\\
	&\quad + \bra{\mu_t^{(N)}, \frac12 f'' + \partial_t f} dt  - \frac{c}{2N} \bra{\mu_t^{(N)}, f''}dt. \label{Ito-f}
\end{align}
To be more precise, the above formula holds when $\lambda_1(t), \dots, \lambda_N(t)$ are all distinct, which occurs almost surely for almost every $t \in \R$. The form~\eqref{Ito-f} is a starting point to deal with many problems in the study of stochastic processes related to beta ensembles.

\subsection{Law of large numbers}
Denote by $S_n^{(N)}(t) = \bra{\mu_t^{(N)}, x^n} = \frac1N \sum_{i=1}^N \lambda_i(t)^n$ the $n$th moment process of $\{\lambda_i(t)\}$.
The formula~\eqref{Ito-f} for $f = x^n$ reads 
\begin{align*}
	dS_n^{(N)}(t) &=  	\frac{ n}N\sum_{i=1}^N \lambda_i(t)^{n - 1}  d b_i(t) + \frac{c n}2 \sum_{j = 0}^{n-2} S_{j}^{(N)}(t) S_{n-2-j}^{(N)}(t) dt \\
	&\quad + \frac12 n(n-1) S_{n-2}^{(N)}(t) dt - \frac c{2N} n(n-1) S_{n-2}^{(N)}(t) dt,
\end{align*}
or in the integral form 
\begin{align}
	S_n^{(N)}(t) &=  	\frac{ n}N\sum_{i=1}^N \int_0^t \lambda_i(u)^{n - 1}  d b_i(u) + \frac{c n}2 \sum_{j = 0}^{n-2} \int_0^t  S_{j}^{(N)}(u) S_{n-2-j}^{(N)}(u) du \notag \\
	&\quad + \frac12 n(n-1) \int_0^t S_{n-2}^{(N)}(u) du - \frac c{2N} n(n-1) \int_0^t S_{n-2}^{(N)}(u) du, \quad (n \ge 1).	\label{Gauss-Sn}
\end{align}

Let $T>0$ be fixed and let $C([0, T])$ be the space of continuous functions on $[0,T]$ endowed with the supremum norm $\norm{\cdot}_\infty$. We regard $S^{(N)}_n$ as a $C([0, T])$-valued random element. The idea now is to use induction to show the convergence of the $n$th moment process $S_n^{(N)}$  in the space $C([0, T])$. This idea has been used in \cite{Trinh-Trinh-BLP} to establish the LLN for the empirical process of beta Laguerre processes. A sequence $\{X^{(N)}\}_N$ of $C([0, T])$-valued random elements is said to converge in probability to a non-random element $x \in C([0, T])$ if for any $\varepsilon > 0$, 
\[
		\lim_{N \to \infty} \Prob(\norm{X^{(N)} - x}_\infty \ge \varepsilon) =0.
\] 
We denote the convergence in probability by `$\Pto$'.
Our result is as follows.
\begin{theorem}\label{thm:Gaussian-LLN}
There is a probability measure-valued process $(\mu_t)_{0 \le t \le T}$ such that  for any polynomial $p(x)$, as $N \to \infty$,
	\[
		\bra{\mu_t^{(N)}, p} \Pto \bra{\mu_t, p}.
	\]
The convergence still holds when $f(t, x)$ is a polynomial in $t$ and $x$, that is,  as $N \to \infty$,
\[
	\bra{\mu_t^{(N)}, f} \Pto \bra{\mu_t, f}.
\]
Here for a function $f(t, x)$ of two variables $t$ and $x$, the integral $\bra{\mu_t^{(N)}, f}$ is taken over $x$. Moreover, $\bra{\mu_t, f}$ is differentiable (as a function of $t$) and the following relation holds 
\begin{equation}\label{integral-equation}
	\partial_t \bra{\mu_t, f} = \frac c2 \iint \frac{f'(t, x) - f'(t, y)}{x - y}d\mu_t(x) d\mu_t(y)  + \bra{\mu_t, \frac12 f'' + \partial_t f}.
\end{equation}
Here note that we also use the partial derivation notation $\partial_t \bra{\mu_t, f}$ to denote the derivative with respect to $t$, although the function $\bra{\mu_t, f}$ depends only on $t$.
\end{theorem}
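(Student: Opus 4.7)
My plan is to follow the moment method of \cite{Trinh-Trinh-BLP} and prove Theorem~\ref{thm:Gaussian-LLN} by induction on the degree $n$ of $S_n^{(N)}$. The base cases are immediate: $S_0^{(N)} \equiv 1$, and for $n = 1$ the Dyson interaction in \eqref{Dyson} is antisymmetric so $S_1^{(N)}(t) = N^{-1}\sum_i b_i(t)$, and Doob's $L^2$ inequality gives $\Ex[\sup_{t \le T}(S_1^{(N)}(t))^2] = O(1/N)$, hence $S_1^{(N)} \Pto 0$ in $C([0,T])$. For the inductive step I would assume $S_k^{(N)} \Pto S_k$ in $C([0,T])$ for every $k \le n-1$, with $S_k$ a deterministic continuous function on $[0,T]$. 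Writing \eqref{Gauss-Sn} as $S_n^{(N)}(t) = M_n^{(N)}(t) + D_n^{(N)}(t)$, the drift $D_n^{(N)}$ is built from the $S_k^{(N)}$ with $k \le n-2$ by the continuous operations of multiplication and time-integration on $C([0,T])$, so the induction hypothesis yields $D_n^{(N)} \Pto D_n$ in $C([0,T])$ with an explicit continuous limit; the $-c/(2N)$ correction is harmless.

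For the martingale $M_n^{(N)}$, the bracket is $\langle M_n^{(N)}\rangle_t = (n^2/N)\int_0^t S_{2n-2}^{(N)}(u)\,du$, and Doob's $L^2$ maximal inequality bounds $\Ex[\sup_{t \le T}(M_n^{(N)}(t))^2]$ by $(4n^2/N)\int_0^T \Ex[S_{2n-2}^{(N)}(u)]\,du$. The crucial a priori estimate I need is $\sup_N \sup_{u \in [0,T]} \Ex[S_{2n-2}^{(N)}(u)] < \infty$. This I would derive from the fact recalled in the introduction that $\{\lambda_i(u)/\sqrt u\}$ is distributed as the Gaussian beta ensemble \eqref{GbE}, so $\Ex[S_{2k}^{(N)}(u)] = u^k\,\Ex[\bra{L_N, x^{2k}}]$, and the right-hand side is uniformly bounded in $N$ in the regime $\beta N \to 2c$ (a standard computation via the tridiagonal matrix model, or via the almost-sure convergence $\bra{L_N, x^{2k}} \to \bra{\nu_c, x^{2k}}$ together with uniform integrability). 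This forces $\sup_{t \le T}|M_n^{(N)}(t)| \to 0$ in probability, closing the induction with $S_n^{(N)} \Pto S_n := D_n$.

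Once each $S_n^{(N)}$ converges, the existence of $\mu_t$ comes for free: the limit moments $(S_n(t))_{n\ge 0}$ can be identified concretely as those of the $\sqrt t$-dilate of $\nu_c$ (from the marginal GbE identity), which is a probability measure with all moments finite, so $\mu_t$ exists and $S_n(t) = \bra{\mu_t, x^n}$. The extension to $f(t,x)$ polynomial in both variables is then immediate: $\bra{\mu_t^{(N)}, f(t,\cdot)}$ is a finite sum $\sum_k a_k(t) S_k^{(N)}(t)$ with $a_k$ polynomial in $t$, and multiplication by a fixed continuous function of $t$ preserves convergence in probability in $C([0,T])$. For the integral equation \eqref{integral-equation}, I would integrate \eqref{Ito-f} in time, apply the same Doob estimate to the stochastic integral on the right to kill the martingale, and pass $N \to \infty$ termwise using uniform convergence of the $S_k^{(N)}$ and the continuity of the operations involved; differentiating in $t$ the resulting integrated identity then produces \eqref{integral-equation}.

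The main obstacle, as I see it, is precisely the a priori moment bound $\sup_N \sup_{u \le T}\Ex[S_{2k}^{(N)}(u)] < \infty$. Without it the $M_n^{(N)}$ term cannot be shown to vanish and the induction collapses. A direct bootstrap via It\^o's formula applied to $(S_n^{(N)})^2$ would reintroduce the quadratic cross-terms $\sum_j S_j S_{n-2-j}$ from \eqref{Gauss-Sn} and push the required moment order ever higher, so the exact distributional identity at fixed time $t$ with the Gaussian beta ensemble \eqref{GbE} is the key external input that lets the induction actually close.
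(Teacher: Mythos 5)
Your proposal is correct and follows essentially the same route as the paper: induction on the moment order, Doob's maximal inequality plus the fixed-time identification of $\{\lambda_i(t)/\sqrt t\}$ with the Gaussian beta ensemble to kill the martingale part, and continuity of sums, products and time-integration on $C([0,T])$ to pass the drift to the limit. The only cosmetic difference is that you identify the limit moments directly via the marginal GbE law of large numbers, whereas the paper derives the self-convolutive recurrence \eqref{moment-recurrence} for $u_n$ and then cites the known identification of these as the moments of $\nu_c$; both are valid.
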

\begin{remark}
The limiting process $(\mu_t)_{0\le t \le T}$ will be identified later in the proof of Theorem~\ref{thm:Gaussian-LLN}. It turns out that the probability measure $\mu_t$ is determined by moments. Thus, the convergence of all moment processes implies that the sequence of empirical measure processes $(\mu_t^{(N)})_{0 \le t \le T}$ converges to $(\mu_t)_{0 \le t \le T}$ in probability in the space of continuous probability measure-valued processes endowed with the topology of uniform convergence. Refer to \cite{Trinh-Trinh-BLP} for the derivation. 
\end{remark}

We prove Theorem~\ref{thm:Gaussian-LLN} through several lemmas. We begin with the martingale part.
\begin{lemma}
For each $n=1,2,\dots$, denote by
\[
	M_n^{(N)}(t) = \frac{ n}N\sum_{i=1}^N \int_0^t \lambda_i(u)^{n - 1}  d b_i(u)
\]
the martingale part in $S_n^{(N)}$. Then 
\[
	M_n^{(N)}\Pto 0 \quad \text{as} \quad N \to \infty.
\]
\end{lemma}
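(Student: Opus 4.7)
The plan is to apply Doob's $L^2$ maximal inequality to the continuous martingale $M_n^{(N)}$. Since $M_n^{(N)}(0) = 0$, its quadratic variation is immediate from the orthogonality of the driving Brownian motions:
\begin{equation*}
	\langle M_n^{(N)}\rangle(t) = \frac{n^2}{N^2}\sum_{i=1}^N \int_0^t \lambda_i(u)^{2(n-1)}\,du = \frac{n^2}{N}\int_0^t S_{2(n-1)}^{(N)}(u)\,du,
\end{equation*}
so Doob gives
\begin{equation*}
	\Ex\!\left[\sup_{0\le t\le T}|M_n^{(N)}(t)|^2\right] \le 4\,\Ex[\langle M_n^{(N)}\rangle(T)] = \frac{4n^2}{N}\int_0^T \Ex[S_{2(n-1)}^{(N)}(u)]\,du.
\end{equation*}
Once the integrand on the right is bounded uniformly in $N$ and $u\in[0,T]$, the whole expression is $O(1/N)$, and Chebyshev's inequality delivers $\norm{M_n^{(N)}}_\infty \Pto 0$ (with $L^2$ convergence as a bonus).

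The remaining task is therefore a uniform-in-$N$ bound on $\Ex[S_{2(n-1)}^{(N)}(u)]$. I would use the Brownian scaling built into the SDE \eqref{Dyson} started from the origin: replacing $b_i(\cdot)$ by $s^{-1/2}b_i(s\,\cdot)$ shows that $(\lambda_i(t))_{i=1}^N$ and $(\sqrt{t}\,\lambda_i(1))_{i=1}^N$ coincide in law, so $\Ex[S_{2(n-1)}^{(N)}(u)] = u^{n-1}\,\Ex[S_{2(n-1)}^{(N)}(1)]$. At $t=1$ the joint law is the Gaussian beta ensemble \eqref{GbE}, and the moment version of the static LLN in the regime $\beta N \to 2c$ recalled in the introduction forces $\Ex[S_{2(n-1)}^{(N)}(1)]$ to converge to the finite quantity $\int x^{2(n-1)}\,d\nu_c(x)$ and hence to be uniformly bounded in $N$.

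The main subtlety I anticipate is whether invoking the static result is considered cheating, since the present section advertises a self-contained moment-method re-derivation. If so, one can instead close the estimate directly from \eqref{Gauss-Sn}: take expectations, control the cross terms via Jensen and Cauchy-Schwarz in the form
\begin{equation*}
	\Ex[|S_j^{(N)}(u)\,S_{n-2-j}^{(N)}(u)|] \le \sqrt{\Ex[S_{2j}^{(N)}(u)]\,\Ex[S_{2(n-2-j)}^{(N)}(u)]},
\end{equation*}
and run a simultaneous induction on the family of even-moment expectations up to any fixed order, closing the system of integral inequalities by Gronwall on $[0,T]$. Either route yields the required $\Ex[\norm{M_n^{(N)}}_\infty^2] = O(1/N)$, which completes the proof.
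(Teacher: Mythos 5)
Your argument is correct and follows essentially the same route as the paper: compute the quadratic variation, apply Doob's inequality to reduce the problem to bounding $\int_0^T \Ex[\bra{\mu_t^{(N)},x^{2(n-1)}}]\,dt$, use the scaling identity $\lambda_i(t)\overset{d}{=}\sqrt{t}\,\lambda_i(1)$ to reduce to the Gaussian beta ensemble at $t=1$, and invoke the known boundedness of its expected moments in the regime $\beta N\to 2c$. The paper itself cites the static moment bounds from earlier works, so your worry about ``cheating'' is unfounded and the Gronwall fallback is unnecessary.
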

\begin{proof}
Note that $\int_0^t \Ex[\sum_i \lambda_i(u)^{2k}] du < \infty$, for any $k = 1, 2,\dots$,  because we know exactly the joint distribution of $\{\lambda_i(u)\}$. Thus, $M_n^{(N)}$ is a martingale with the quadratic variation 
\[
	[M_n^{(N)}](t) = \frac{n^2}{N} \int_0^t \frac{\sum_{i=1}^N \lambda_i(u)^{2n-2}}{N} du = \frac{n^2}{N} \int_0^t \bra{\mu_u^{(N)}, x^{2(n-1)}} du.
\]
Using Doob's martingale inequality, we deduce that
\begin{align*}
	\Prob \left(\norm{M_n^{(N)}}_\infty \ge \varepsilon \right) &= \Prob \left(\sup_{0 \le t \le T} |M_n^{(N)}(t)| \ge \varepsilon \right) \\
	&\le \frac{\Ex[M_n^{(N)}(T)^2]}{\varepsilon^2} = \frac{n^2}{\varepsilon^2 N}\int_0^T \Ex[\bra{\mu_t^{(N)}, x^{2(n-1)}}] dt.
\end{align*}
Recall that $\{\lambda_i(t) / \sqrt t\}$ is distributed as the Gaussian beta ensemble with parameter $\beta = 2c/N$. It follows that $\Ex[\bra{\mu_t^{(N)}, x^{2(n-1)}}] = t^{n-1}\Ex[\bra{L_N, x^{2(n-1)}}]$ with $L_N$ being  the empirical distribution defined in the introduction. In this regime, the sequence $\{\Ex[\bra{L_N, x^{2(n-1)}}]\}_N$ is easily shown to be bounded (for example, see \cite{Nakano-Trinh-2018, Nakano-Trinh-2020}). Then the integral in the above estimate is bounded (as $N \to \infty$), implying the desired result. The lemma is proved.
\end{proof}

Next, we introduce the following fundamental results whose proof is standard and thus is omitted. They will be used to derive the convergence of $S_n^{(N)}$ by induction.
\begin{lemma}\label{lem:additivity}
Let $\{X^{(N)}\}_N$ and $\{Y^{(N)}\}_N$ be two sequences of $C([0, T])$-valued random elements.  
Assume that $X^{(N)}$ and $Y^{(N)}$ converge in probability to non-random limits $x$ and $y$, respectively. Then the following hold.
\begin{itemize}
	\item[\rm (i)] As $N\to \infty$, 
	\[
		a(t) X^{(N)}(t) + b(t) Y^{(N)}(t) \Pto a(t) x(t) + b(t) y(t) , 
	\]
	where $a, b \in C([0,T])$.	
	\item[\rm (ii)] As $N\to \infty$,  
	\[
		X^{(N)}(t)  Y^{(N)}(t) \Pto x(t) y(t).
	\]
	
	\item[\rm (iii)] As $N\to \infty$, 
	\[
		\int_0^t X^{(N)}(u)du \Pto \int_0^t x(u) du.
	\]
\end{itemize}
\end{lemma}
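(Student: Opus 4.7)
The plan is to reduce each of the three assertions to a deterministic inequality in the supremum norm, followed by a simple union-bound or tightness argument. The essential simplification is that the limits $x$ and $y$ are \emph{non-random} elements of $C([0,T])$, hence $\|x\|_\infty$ and $\|y\|_\infty$ are finite deterministic constants, and similarly $\|a\|_\infty, \|b\|_\infty < \infty$ for $a,b \in C([0,T])$.

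Parts (i) and (iii) are essentially immediate. For (i), I would start from the triangle inequality
\[
\|aX^{(N)} + bY^{(N)} - (ax + by)\|_\infty \le \|a\|_\infty\,\|X^{(N)} - x\|_\infty + \|b\|_\infty\,\|Y^{(N)} - y\|_\infty,
\]
and observe that if the left-hand side exceeds $\varepsilon$, at least one of the two terms on the right exceeds $\varepsilon/2$; a union bound together with the hypotheses $X^{(N)} \Pto x$ and $Y^{(N)} \Pto y$ then gives the claim (the degenerate cases $\|a\|_\infty = 0$ or $\|b\|_\infty = 0$ are trivial). For (iii), the key observation is the contractive estimate
\[
\Big\|\int_0^\cdot (X^{(N)}(u) - x(u))\,du \Big\|_\infty \le \int_0^T |X^{(N)}(u) - x(u)|\,du \le T\,\|X^{(N)} - x\|_\infty,
\]
from which convergence in probability of the integrated processes follows at once.

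Part (ii) is the only step that needs a moment's care. I would use the algebraic identity
\[
X^{(N)} Y^{(N)} - xy = (X^{(N)} - x) Y^{(N)} + x\,(Y^{(N)} - y),
\]
which gives
\[
\|X^{(N)} Y^{(N)} - xy\|_\infty \le \|X^{(N)} - x\|_\infty\,\|Y^{(N)}\|_\infty + \|x\|_\infty\,\|Y^{(N)} - y\|_\infty.
\]
The second term is a deterministic constant times something tending to zero in probability. The first term requires the mild tightness estimate
\[
\Prob(\|Y^{(N)}\|_\infty > \|y\|_\infty + 1) \le \Prob(\|Y^{(N)} - y\|_\infty > 1) \longrightarrow 0,
\]
so that for any $\varepsilon > 0$,
\[
\Prob\!\big(\|X^{(N)} - x\|_\infty\,\|Y^{(N)}\|_\infty \ge \varepsilon\big) \le \Prob(\|Y^{(N)}\|_\infty > \|y\|_\infty + 1) + \Prob\!\big(\|X^{(N)} - x\|_\infty \ge \tfrac{\varepsilon}{\|y\|_\infty + 1}\big),
\]
and both terms tend to zero. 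This tightness step is the only non-trivial point; everything else is routine triangle-inequality bookkeeping. The lemma will then serve as the algebraic engine that, together with the vanishing of the martingale part $M_n^{(N)}$, propagates by induction on $n$ the convergence of the moment process $S_n^{(N)}$ through the integral identity~\eqref{Gauss-Sn}.
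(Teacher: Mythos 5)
Your proof is correct; the paper itself omits the argument, stating only that it is ``standard,'' and what you have written is precisely the standard argument one would supply (triangle inequality plus union bound for (i), the contractive estimate for (iii), and the decomposition $X^{(N)}Y^{(N)}-xy=(X^{(N)}-x)Y^{(N)}+x(Y^{(N)}-y)$ with the tightness bound on $\|Y^{(N)}\|_\infty$ for (ii)). No gaps.
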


\begin{proof}[Proof of Theorem~\rm\ref{thm:Gaussian-LLN}]
For $n=0,1$, it is clear that
\[
	S_0^{(N)}(t) \equiv 1,\quad 
	S_1^{(N)}(t) = M_1^{(N)}(t) \Pto 0.
\]
Now we show by induction that each moment process converges in probability to a non-random limit in $C([0, T])$ as $N\to \infty$. Indeed, assume that for any $l \le n-1$, the $l$th moment process $S_{l}^{(N)}$ converges in probability to $m_l(t) \in C([0, T])$ as $N \to \infty$. We need to show that $S_n^{(N)}$ also converges. Observe that the martingale part and the last term in the equation~\eqref{Gauss-Sn} converge to zero. The remaining terms converge by induction assumption with the help of Lemma~\ref{lem:additivity}. Combining all those observations, we deduce that 
\[
	S_{n}^{(N)}(t) \Pto m_n(t) \quad \text{as} \quad N \to \infty, 
\]
where 
$m_n(t)$ is defined as 
\begin{equation}\label{mn}
	m_n(t) =\int_0^t\bigg( \frac12 n(n - 1)m_{n-2}(u) + \frac{cn}2 \sum_{j=0}^{n-2} m_j(u) m_{n-2-j}(u) \bigg) du.
\end{equation}

Note that $m_1(t) \equiv 0$. It then follows that the limiting moment process $m_n(t)$ is zero, $m_n(t) \equiv 0$, for any odd $n$. By induction, the equation \eqref{mn} implies first that $m_n(t)$ is a continuous function, for any $n \ge 0$. Then taking the derivative of the integral, we obtain a differential equation
\begin{align}
	\partial_t m_n(t) &=\frac12 n(n - 1)m_{n-2}(t) + \frac{cn}2 \sum_{j=0}^{n-2} m_j(t) m_{n-2-j}(t) \notag\\
	&=\frac12 n(n - 1)m_{n-2}(t) + \frac{cn}2 \sum_{j=0, j :\text{even}}^{n-2} m_j(t) m_{n-2-j}(t) \label{momentn}
\end{align}
with initial condition $m_n(0)=0\ (n \ge 1)$.
By induction again, we deduce that $m_n(t)$ is of the form 
\[
	m_n(t) = u_n t^{n/2},
\]
where $\{u_n\}_{n \ge 0}$ is a sequence of real numbers satisfying a self-convolutive recurrence
\begin{equation}\label{moment-recurrence}
\begin{cases}
	u_{2n} = (2n - 1) u_{2n-2} + c \sum_{j = 0, j:\text{even}}^{2n-2} u_{j} u_{2n-2-j},\quad (u_0 = 1),\\
	u_{2n+1} = 0.
\end{cases}
\end{equation}
It turns out that $\{u_{n}\}$ are moments of the probability measure $\nu_c$ of associated Hermite polynomials \cite{DS15},
\[
	u_n = \int_\R x^n \nu_c(x) dx.
\]
The probability measure $\nu_c$ is determined by moments and its density $\nu_c(x)$ is given in \eqref{nuc}. 

Next, we define the limiting probability measure-valued process $(\mu_t)$ as
\[
	\mu_t(dx) = \frac1{\sqrt t}\nu_c(x / \sqrt t) dx, \quad(t >0), \quad \mu_0 = \delta_0.
\]
Then 
\[
	\bra{\mu_t, x^n} =  u_{n} t^{n/2} = m_n(t). 
\]
Now we rewrite the convergence of moment processes in the following form: for $n=0,1,\dots,$ as $N\to \infty$,
\[
	\bra{\mu_t^{(N)}, x^n} \Pto \bra{\mu_t, x^n},
\]
\[
	\partial_t \bra{\mu_t, x^n} = \frac12  \bra{\mu_t,n(n-1) x^{n-2}} + \frac{c}{2}\iint\frac{nx^{n-1} - n y^{n-1}}{x-y} d\mu_t(x) d\mu_t(y). 
\]
Lemma~\ref{lem:additivity} implies that the convergence holds for any polynomial $p(x)$ and any polynomial $f(t,x)$. The equation~\eqref{integral-equation} follows immediately from the equation~\eqref{Ito-f}. Theorem~\ref{thm:Gaussian-LLN} is proved.
\end{proof}

\begin{remark}
As the limiting measure of the empirical distribution of the Gaussian beta ensemble~\eqref{GbE} in the  regime $\beta N \to 2 c \in (0, \infty)$,
the measure $\nu_c$ in the proof of Theorem~\ref{thm:Gaussian-LLN} was calculated explicitly in \cite{Allez12, DS15}. It is nothing but the probability measure of associated Hermite polynomials (see \cite{Askey-Wimp-1984} or \cite[\S 5.6]{Ismail-book}).
	Theorem~\ref{thm:Gaussian-LLN} not only shows the convergence at the process level but also provides another way to identify the limiting measure $\nu_c$. Recall that fluctuations around the limit were established in \cite{Nakano-Trinh-2018, Trinh-2019} by using the tridiagonal random matrix model. It is known that for a continuously differentiable function $f$ whose derivative $f'$ is of polynomial growth, the following central limit theorem holds. As $N \to \infty$, 
\begin{equation*}
	\sqrt{N} \Big(\bra{L_N, f} - \Ex[\bra{L_N, f}] \Big) \dto \Normal(0, \sigma_{f,c}^2),
\end{equation*}
where $\sigma_{f, c}^2 \ge 0$ is the limiting variance. In the next subsection, we are going to derive CLTs at the process level. As a consequence, we recover the above CLT when $f$ is a polynomial with more information on the limiting variance.

\end{remark}

\subsection{Central limit theorem}
We begin this section by introducing several concepts on the convergence in distribution of continuous processes.

For $d \ge 1$, let $C([0, T]; \R^d)$ denote the space of continuous functions $f \colon [0, T] \to \R^d$ endowed with the supremum norm. (When $d = 1$, we have used $C([0, T]])$ instead of $C([0, T]; \R^d)$.) Since $C([0, T]; \R^d)$ is a complete, separable metric space, the weak convergence, or the convergence in distribution of $C([0, T]; \R^d)$-valued random elements are defined as usual as follows. A sequence $X^{(N)}$ converges in distribution/weakly to $X$ if for any bounded continuous function $F \colon C([0, T]; \R^d) \to \R$, 
\[
	\Ex[F(X^{(N)})] \to \Ex[F(X)] \quad \text{as} \quad N \to \infty.
\]
We use the same notation `$\dto$' to denote that convergence. The joint convergence of $C([0, T])$-valued random elements will be understood as the convergence of $C([0, T]; \R^d)$-valued random elements. Namely, we say that $X^{(N; 1)}, X^{(N; 2)}, \dots, X^{(N; d)}$ jointly converge in distribution to $X_1, \dots, X_d$ (as random elements on $C([0, T])$) if $X^{(N)}=(X^{(N; 1)}, X^{(N; 2)}, \dots, X^{(N; d)})$ converges in distribution to $X=(X_1, \dots, X_d)$ as random elements on  $C([0, T]; \R^d)$. 

The continuous mapping theorem implies the convergence of finite dimensional distributions, that is, 
for any $t_1, \dots, t_k \in [0, T]$, 
\[
	(X^{(N)}(t_1), X^{(N)}(t_2),\dots, X^{(N)}(t_k)) \dto (X(t_1), X(t_2), \dots, X(t_k)) \text{ as } N \to \infty, 
\]
as the convergence in distribution of $\R^{kd}$-valued random elements. The converse is not true in general. It turns out that the sequence $\{X^{(N)}\}_N$ converges in distribution to $X$, if and only if (i) the sequence $\{X^{(N)}\}_N$ is tight, and (ii) $\{X^{(N)}\}_N$  converges in finite dimensional distributions to $X$.

We now deal with the convergence in distribution of the martingale part in \eqref{Ito-f}.
\begin{lemma}\label{lem:joint}
Let $f_k$ be a polynomial in $t$ and $x$, for $k = 1, \dots, n$. 
Define  
\[
	\Phi^{(f_k; N)}(t) = \frac{1}{\sqrt N} \sum_{i=1}^N \int_0^t f_k(u, \lambda_i(u)) db_i(u).
\]
Then $\{\Phi^{(f_k; N)} \}_{k=1}^n$ jointly converge to Gaussian processes $\{\eta_k\}_{k=1}^n$ of mean zero and covariance 
\[
	\Ex[\eta_k(s) \eta_l(t)] = \int_0^{s\wedge t} \bra{\mu_u, f_k(u, x) f_l(u, x)} du.
\]

\end{lemma}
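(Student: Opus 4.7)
The plan is to recognize each $\Phi^{(f_k;N)}$ as a continuous $L^2$-martingale and to apply the multidimensional martingale functional central limit theorem (Rebolledo's theorem / Jacod-Shiryaev VIII.3.11). Since $f_k$ is polynomial and, for each fixed $u$, $\{\lambda_i(u)/\sqrt u\}$ has the law of a Gaussian beta ensemble with $\beta = 2c/N$, all polynomial moments $\Ex[\bra{\mu_u^{(N)}, x^{2m}}]$ are finite and uniformly bounded in $N$ for $u$ in any bounded interval (as already exploited in the LLN proof). Hence $\Phi^{(f_k;N)}$ is a well-defined continuous square-integrable martingale on $[0,T]$ with respect to the filtration $(\cF_t)$ generated by $(b_1,\dots,b_N)$.

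The key computation is the cross-quadratic variation. Because the $b_i$ are independent standard Brownian motions,
\[
\langle \Phi^{(f_k;N)}, \Phi^{(f_l;N)}\rangle(t) = \frac{1}{N}\sum_{i=1}^N \int_0^t f_k(u,\lambda_i(u))\,f_l(u,\lambda_i(u))\,du = \int_0^t \bra{\mu_u^{(N)}, f_k f_l}\,du.
\]
The product $f_k f_l$ is again a polynomial in $t$ and $x$, so Theorem~\ref{thm:Gaussian-LLN} applied to the integrand, combined with Lemma~\ref{lem:additivity}(iii), yields
\[
\langle \Phi^{(f_k;N)}, \Phi^{(f_l;N)}\rangle(t) \Pto C_{kl}(t) := \int_0^t \bra{\mu_u, f_k f_l}\,du,
\]
with $C_{kl}$ a deterministic continuous function of $t$.

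With the covariation limit in hand, I would invoke Rebolledo's theorem on the $n$-dimensional continuous martingale $M^{(N)} = (\Phi^{(f_1;N)}, \dots, \Phi^{(f_n;N)})$: convergence of the full covariation matrix to a deterministic continuous limit, together with the trivially satisfied Lindeberg-type jump condition (the sample paths are continuous, hence all jumps vanish), gives joint convergence in distribution in $C([0,T]; \R^n)$ to a centered continuous Gaussian martingale $(\eta_1,\dots,\eta_n)$ with $\Ex[\eta_k(s)\eta_l(t)] = C_{kl}(s\wedge t)$. This is precisely the stated conclusion.

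The main obstacle, as for any functional martingale CLT, is the packaging of tightness and finite-dimensional convergence. Tightness of each $\Phi^{(f_k;N)}$ in $C([0,T])$ is standard once we have the uniform bound $\sup_N \Ex[\langle \Phi^{(f_k;N)}\rangle(T)] < \infty$, e.g.\ via Burkholder-Davis-Gundy together with Aldous's criterion; joint tightness then follows coordinatewise. The finite-dimensional Gaussian limits can alternatively be obtained by applying the one-dimensional martingale CLT to arbitrary linear combinations and invoking Cram\'er-Wold, with the covariance structure determined by the already-computed limits $C_{kl}$; either route rests on the same single ingredient, namely the convergence of the quadratic covariations derived above.
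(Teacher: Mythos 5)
Your proposal is correct and follows essentially the same route as the paper: compute the cross-quadratic variations $\int_0^t \bra{\mu_u^{(N)}, f_k f_l}\,du$, use the LLN of Theorem~\ref{thm:Gaussian-LLN} to pass to the deterministic limit, and invoke Rebolledo's martingale functional CLT for the joint convergence to the centered Gaussian processes with the stated covariance.
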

\begin{proof}
Note that $\{\Phi^{(f_k; N)}\}_{k=1}^n$ are martingales with the following cross-variation 
\begin{align*}
	[\Phi^{(f_k; N)},  \Phi^{(f_l; N)}](t) &= \int_0^t \frac{ \sum_{i=1}^N f_k(u, \lambda_i(u)) f_l(u, \lambda_i(u))}{N} du \\
	&= \int_0^t \bra{\mu_u^{(N)}, f_k(u, x) f_l(u,x)} du.
\end{align*}
Since the product $f_k f_l$ is again a polynomial, the LLN in Theorem~\ref{thm:Gaussian-LLN} implies that 
\[
	[\Phi^{(f_k; N)}, \Phi^{(f_l; N)}](t) \to \int_0^t \bra{\mu_u, f_k(u, x) f_l(u, x)} du\quad \text{as} \quad N \to \infty.
\]
For each $t$, the convergence holds in probability and in $L^q$, for any $q \in [1, \infty)$. Then by a general theorem in \cite{Rebolledo-1980}, there are Gaussian processes $\{\eta_k\}_{k=1}^n$ of mean zero and covariance 
\[
	\Ex[\eta_k(s) \eta_l(t)] = \int_0^{s\wedge t} \bra{\mu_u, f_k(u, x) f_l(u, x)} du,
\]
to which $\{\Phi^{(f_k; N)}\}_{k=1}^n$ jointly converge. The lemma is proved.
\end{proof}

Now let us study the moment processes. Let
\[
	\tilde S_n^{(N)}(t) = \sqrt N \Big(S_n^{(N)}(t) - m_n(t) \Big).
\]
It follows from the formula~\eqref{Gauss-Sn} and the relation~\eqref{mn} that
\begin{align}
	\tilde S_n^{(N)}(t) &=  	\frac{n}{\sqrt N}\sum_{i=1}^N \int_0^t \lambda_i(u)^{n - 1}  d b_i(u) \notag \\
	&\quad + \frac{c n}2 \sum_{j = 0}^{n-2} \int_0^t  \sqrt N \Big( S_{j}^{(N)}(u) S_{n-2-j}^{(N)}(u) - m_j(u) m_{n-2-j}(u)\Big) du \notag \\
	&\quad + \frac12 n(n-1) \int_0^t \tilde S_{n-2}^{(N)}(u) du - \frac c{2\sqrt N} n(n-1) \int_0^t S_{n-2}^{(N)}(u) du. \label{Sntilde}
\end{align}
Note that the martingale part in the above formula is identical with $\Phi^{(n x^{n-1}; N)}$. Thus, Lemma~\ref{lem:joint} implies that $\{\Phi^{(m x^{m-1}; N)} \}_{m=1}^n$ jointly converge in distribution to Gaussian processes $\{\eta_m \}_{m=1}^n$. The following result is analogous to Theorem 4.3.20 in \cite{Anderson-book} in case $\beta$ is fixed.

\begin{theorem}\label{thm:joint-moment}
	For each $n$, 
	\[
		\tilde S_n^{(N)}(t) = \sqrt N \left(S_n^{(N)}(t) - m_n(t) \right) \dto \xi_n,
	\]
where $\xi_n$ is a Gaussian process defined inductively as 
\begin{equation}\label{xin}
	\xi_n(t) = \eta_n(t) + cn \sum_{j=0}^{n-2} \int_0^t m_j(u) \xi_{n-2-j}(u) du + \frac12 n(n-1) \int_0^t \xi_{n-2}(u)du.
\end{equation}
The joint convergence also holds.
\end{theorem}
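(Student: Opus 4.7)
The approach is induction on $n$, strengthened to the joint convergence of the enlarged tuple
\[
\mathbf Z_n^{(N)}:=\bigl(S_0^{(N)},\dots,S_n^{(N)},\ \tilde S_0^{(N)},\dots,\tilde S_n^{(N)},\ \Phi^{(x^0;N)},\dots,\Phi^{(x^n;N)}\bigr)
\]
in $C([0,T];\R^{3(n+1)})$ to the coupled limit $(m_k,\xi_k,\eta_{k+1})_{k=0}^n$, where the $\xi_k$ and $\eta_{k+1}$ are related through \eqref{xin}. The base cases $n=0,1$ are immediate: $\tilde S_0^{(N)}\equiv 0=\xi_0$ and $\tilde S_1^{(N)}=\Phi^{(1;N)}$, while $S_k^{(N)}\Pto m_k$ is supplied by Theorem~\ref{thm:Gaussian-LLN} and the joint convergence of the martingales is Lemma~\ref{lem:joint}.

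For the induction step the key algebraic identity is
\[
\sqrt N\bigl(S_j^{(N)} S_{n-2-j}^{(N)}-m_j m_{n-2-j}\bigr)=\tilde S_j^{(N)}\,S_{n-2-j}^{(N)}+m_j\,\tilde S_{n-2-j}^{(N)},
\]
which recasts \eqref{Sntilde} as
\[
\tilde S_n^{(N)}=\Phi^{(n x^{n-1};N)}+\frac{cn}{2}\sum_{j=0}^{n-2}\int_0^\cdot\!\bigl[\tilde S_j^{(N)}(u)\,S_{n-2-j}^{(N)}(u)+m_j(u)\,\tilde S_{n-2-j}^{(N)}(u)\bigr]du+\frac{n(n-1)}{2}\int_0^\cdot\!\tilde S_{n-2}^{(N)}(u)\,du+R^{(N)},
\]
where the remainder $R^{(N)}=-\frac{cn(n-1)}{2\sqrt N}\int_0^\cdot\! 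S_{n-2}^{(N)}(u)\,du$ vanishes uniformly on $[0,T]$ in probability by Theorem~\ref{thm:Gaussian-LLN}. This exhibits $\tilde S_n^{(N)}-R^{(N)}$ as a fixed, $N$-independent continuous functional, built from sums, pointwise products, multiplications by the fixed continuous functions $m_j$, and integrations, of the enlarged tuple $(\mathbf Z_{n-1}^{(N)},\Phi^{(n x^{n-1};N)})$. Lemma~\ref{lem:joint} extends the martingale family to include $\Phi^{(n x^{n-1};N)}$ jointly with the lower-order ones, and the induction hypothesis supplies the rest of $\mathbf Z_{n-1}^{(N)}$; the continuous mapping theorem together with Slutsky's lemma then yield $\tilde S_n^{(N)}\dto \xi_n$, the symmetry $j\leftrightarrow n-2-j$ collapsing the two bracketed terms into $2 m_j(u)\,\xi_{n-2-j}(u)$ in the limit, which reproduces~\eqref{xin}. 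Finally $S_n^{(N)}=m_n+N^{-1/2}\tilde S_n^{(N)}\Pto m_n$, closing the induction on the enlarged tuple.

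The main technical point is the continuity of the recursive functional in the supremum norm: pointwise multiplication on $C([0,T])$ is continuous only on sup-norm bounded subsets. I would dispose of this by invoking the Skorokhod representation theorem to realize the joint convergence on a common probability space as almost-sure uniform convergence of continuous processes, after which the recursion passes to the limit pointwise in $\omega$ by uniform continuity of multiplication on the (random but almost-surely bounded) sup-ball containing both the pre-limit and limit trajectories. Alternatively, one checks directly that the functional is continuous on every sup-norm bounded subset of the relevant product space and applies the continuous mapping theorem, using the tightness of the pre-limit processes together with the almost-sure local boundedness of the Gaussian limit.
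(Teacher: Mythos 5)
Your argument is correct and is essentially the paper's proof: the same induction carrying along a jointly convergent tuple, Lemma~\ref{lem:joint} for the martingale parts, the same exact linearization $\sqrt N\bigl(S_j^{(N)}S_{n-2-j}^{(N)}-m_jm_{n-2-j}\bigr)=\tilde S_j^{(N)}S_{n-2-j}^{(N)}+m_j\tilde S_{n-2-j}^{(N)}$ (which is precisely the content of Lemma~\ref{lem:product}), and the continuous mapping theorem combined with Slutsky (Billingsley, Theorem~3.1). The only organizational difference is that the paper first replaces $\tilde S_j^{(N)}S_{n-2-j}^{(N)}$ by $m_{n-2-j}\tilde S_j^{(N)}$ up to a $\Pto 0$ error (Lemma~\ref{lem:product}), so the functional $F$ it feeds into the continuous mapping theorem is affine in the tuple with deterministic continuous coefficients $m_j$ and hence globally continuous on $C([0,T];\R^M)$; this sidesteps the continuity-of-multiplication issue that you correctly identify and handle via Skorokhod representation.
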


We need the following lemma.
\begin{lemma}\label{lem:product}
	Assume that $\tilde S_{n_1}^{(N)}$ and $\tilde S_{n_2}^{(N)}$ jointly converge in distribution to $\xi_{n_1}$ and  $\xi_{n_2}$. Then
\begin{equation}
	\sqrt{N}\Big( S_{n_1}^{(N)}(t) S_{n_2}^{(N)}(t) - m_{n_1}(t) m_{n_2}(t) \Big)  - \Big(m_{n_1}(t) \tilde S_{n_2}(t)  + m_{n_2}(t)\tilde S_{n_1}(t) \Big) \Pto 0.\label{Ptozero}
\end{equation}
Consequently, 
\begin{equation}\label{Sn1n2}
	\sqrt{N}\Big( S_{n_1}^{(N)}(t) S_{n_2}^{(N)}(t) - m_{n_1}(t) m_{n_2}(t) \Big)  
	 \dto m_{n_1}(t) \xi_{n_2}(t) + m_{n_2}(t)\xi_{n_1}(t).
\end{equation}
\end{lemma}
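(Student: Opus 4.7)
The plan is to exploit the simple algebraic identity
\[
	S_{n_1}^{(N)}(t) S_{n_2}^{(N)}(t) - m_{n_1}(t) m_{n_2}(t) = m_{n_2}(t)\bigl(S_{n_1}^{(N)}(t) - m_{n_1}(t)\bigr) + m_{n_1}(t)\bigl(S_{n_2}^{(N)}(t) - m_{n_2}(t)\bigr) + \bigl(S_{n_1}^{(N)}(t) - m_{n_1}(t)\bigr)\bigl(S_{n_2}^{(N)}(t) - m_{n_2}(t)\bigr).
\]
Multiplying by $\sqrt N$ and using $\tilde S_{n_i}^{(N)} = \sqrt N(S_{n_i}^{(N)} - m_{n_i})$, this becomes
\[
	\sqrt N\bigl(S_{n_1}^{(N)} S_{n_2}^{(N)} - m_{n_1} m_{n_2}\bigr) = m_{n_2}\tilde S_{n_1}^{(N)} + m_{n_1}\tilde S_{n_2}^{(N)} + \frac{1}{\sqrt N}\tilde S_{n_1}^{(N)}\tilde S_{n_2}^{(N)},
\]
so the first assertion~\eqref{Ptozero} reduces to showing the last term converges to zero in probability in $C([0, T])$.

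To handle that residual term, I would use the fact that the joint convergence in distribution $(\tilde S_{n_1}^{(N)}, \tilde S_{n_2}^{(N)}) \dto (\xi_{n_1}, \xi_{n_2})$ in $C([0, T]; \R^2)$ implies that the sequence is tight, hence bounded in probability with respect to the supremum norm. The continuous mapping theorem applied to multiplication $C([0, T]) \times C([0, T]) \to C([0, T])$ then gives that $\tilde S_{n_1}^{(N)} \tilde S_{n_2}^{(N)}$ is also bounded in probability. Dividing by $\sqrt N$ yields $\tfrac{1}{\sqrt N}\tilde S_{n_1}^{(N)}\tilde S_{n_2}^{(N)} \Pto 0$, which establishes~\eqref{Ptozero}.

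For the consequence~\eqref{Sn1n2}, the continuous mapping theorem applied to the linear map $(x, y) \mapsto m_{n_2}(\cdot) x + m_{n_1}(\cdot) y$ (continuous because $m_{n_1}, m_{n_2} \in C([0, T])$) gives
\[
	m_{n_2}\tilde S_{n_1}^{(N)} + m_{n_1}\tilde S_{n_2}^{(N)} \dto m_{n_2}\xi_{n_1} + m_{n_1}\xi_{n_2}.
\]
Combining this with~\eqref{Ptozero} via Slutsky's theorem produces the stated convergence in distribution.

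There is no real obstacle here; the argument is entirely structural. The only point to verify carefully is that joint tightness in $C([0, T]; \R^2)$ really does yield supremum-norm boundedness in probability for the product, but this is immediate from the fact that tightness gives, for every $\varepsilon > 0$, a compact (hence uniformly bounded) set $K \subset C([0, T])$ containing $\tilde S_{n_i}^{(N)}$ with probability at least $1 - \varepsilon$ for all $N$.
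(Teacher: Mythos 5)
Your proof is correct and follows essentially the same route as the paper: the residual term you isolate, $\tfrac{1}{\sqrt N}\tilde S_{n_1}^{(N)}\tilde S_{n_2}^{(N)}$, is exactly the cross term $(S_{n_1}^{(N)}-m_{n_1})\tilde S_{n_2}^{(N)}$ that the paper identifies, and both arguments kill it via stochastic boundedness coming from tightness. The passage to \eqref{Sn1n2} via the continuous mapping theorem and Slutsky (Theorem 3.1 of Billingsley) is also the paper's argument verbatim.
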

\begin{proof}
It is clear that 
\begin{align*}
	&\sqrt{N}\Big( S_{n_1}^{(N)}(t) S_{n_2}^{(N)}(t) - m_{n_1}(t) m_{n_2}(t) \Big) - \Big(m_{n_1}(t)\tilde S_{n_2}(t) +  m_{n_2}(t) \tilde S_{n_1}(t) \Big)\\
	&= (S_{n_1} (t) - m_{n_1}(t) )  \tilde S_{n_2}(t)  \Pto 0.
\end{align*}
Now as a consequence of the continuous mapping theorem, the joint convergence of $\tilde S_{n_1}$ and $\tilde S_{n_2}$ implies that 
\begin{equation}\label{jointSn1Sn2}
	m_{n_1}(t) \tilde S_{n_2}(t)  + m_{n_2}(t)\tilde S_{n_1}(t) \dto m_{n_1}(t) \xi_{n_2}(t) + m_{n_2}(t)\xi_{n_1}(t).
\end{equation}
The convergence~\eqref{Sn1n2} follows from the two equations~\eqref{Ptozero} and \eqref{jointSn1Sn2} by a general result (Theorem~3.1 in \cite{Billingsley-book-1999}).
The lemma is proved.
\end{proof}

\begin{proof}[Proof of Theorem~{\rm\ref{thm:joint-moment}}]
	Let $M \in \N$ be fixed. We aim to show that 
\[
	\{\tilde S_m^{(N)}\}_{m=1}^M \text{ jointly converge to } \{\xi_m\}_{m=1}^M,
\]
with $\xi_n(t)$ defined in \eqref{xin}. Let us start from the joint convergence 
\[
	\{\Phi^{(mx^{m-1}; N)}\}_{m=1}^M \dto \{\eta_m\}_{m=1}^M.
\]
Since $\tilde S_1^{(N)} = \Phi^{(1x^{0}; N)}$, we replace the last entry in the above vector of $\Phi$'s by $\tilde S_1^{(N)}$ without changing the vector of $\eta$'s ($\xi_1 = \eta_1$). Assume for induction that for $n < M$, the following joint convergence holds
\begin{equation}\label{upton}
	\X^{(N)}:= \left\{\{\Phi^{(mx^{m-1}; N)}\}_{m=n}^M, \{\tilde S_m^{(N)}\}_{m=1}^{n-1} \right\} \dto \left\{\{\eta_m\}_{m=n}^M, \{\xi_m\}_{m=1}^{n-1}\right\}.
\end{equation}
Our task is to show the above convergence with $n$ replaced by $n+1$. 
Define the function $F \colon C([0, T]; \R^M) \ni \{x^{(m)}\}_{m=1}^M \to C([0, T])$ by 
\begin{align*}
	F(\{x^{(m)}\}_{m=1}^M)(t) &= x^{(n)}(t)  + cn \sum_{j=0}^{n-2} \int_0^t m_j(u) x^{(n-2-j)}(u) du \\
	&\quad + \frac12 n(n-1) \int_0^t x^{(n-2)}(u)du.
\end{align*}
It is clear that $F$ is a continuous function. Thus, by the continuous mapping theorem, 
\[
	\left\{\{\Phi^{(mx^{m-1}; N)}\}_{m=n+1}^N, F(\X^{(N)}), \{\tilde S_m^{(N)}\}_{m=1}^{n-1} \right\} \dto \left\{\{\eta_m\}_{m=n+1}^M, \{\xi_m\}_{m=1}^{n}\right\}.
\]
Now the formula~\eqref{Sntilde} for $\tilde S_n^{(N)}$ together with Lemma~\ref{lem:product} implies that 
\[
	F(\X^{(N)}) - \tilde S_n^{(N)} \Pto 0.
\]
Again by a general theorem (Theorem 3.1 in \cite{Billingsley-book-1999}), it follows that $F(\X^{(N)})$ can be replaced by $\tilde S_n^{(N)}$. The proof is complete.
\end{proof}

The joint convergence in Theorem~\ref{thm:joint-moment} implies that for any polynomial $f = f(t, x) $ in $t$ and $x$,
\[
	\sqrt N \left(\bra{\mu^{(N)}_t, f } - \bra{\mu_t, f} \right) \dto \bra{\xi, f},
\]
where the limiting process, denoted by $\bra{\xi, f}$, is defined to be a finite linear combination of $\{\xi_n\}$. In particular, the following linearity holds
\[
	\bralr{\xi, \sum_{finite} a_f f} = \sum_{finite} a_f \bra{\xi, f}, \quad a_f \in \R.
\]
Using such notations, we can rewrite the statement in Lemma~\ref{lem:product} as
\begin{align*}
		& \iint (x^{n_1} y^{n_2} + x^{n_2} y^{n_1})  \Big[\sqrt N d\mu_t^{(N)}(x) d\mu_t^{(N)}(y) - \sqrt N d\mu_t(x) d\mu_t(y)\Big]\\
		&\quad  \dto \bralr{\xi, 2 \int (x^{n_1} y^{n_2} + x^{n_2} y^{n_1})  d\mu_t(y)} .%=2 \iint (x^{n_1} y^{n_2} + x^{n_2} y^{n_1})  d\mu_t(y) d\xi(x).
\end{align*}
Here and in what follows, $\xi$ is assumed to act upon functions of $t$ and $x$ variables.

Now let $f=f(t,x)$ be a polynomial in $t$ and $x$. Since  
\[
	\frac{f'(t, x) - f'(t, y)}{x - y} 
\] 
is a linear combination of $t^k (x^{n_1} y^{n_2} + x^{n_2} y^{n_1})$, it follows that 
\begin{align*}
	&\iint \frac{f'(t, x) - f'(t, y)}{x - y}  \Big[\sqrt N d\mu_t^{(N)}(x) d\mu_t^{(N)}(y) - \sqrt Nd\mu_t(x) d\mu_t(y)\Big] \\
	&\quad \dto \bralr{\xi, 2\int \frac{f'(t, x) - f'(t, y)}{x - y} \mu_t(dy)}.
\end{align*}
Consequently, we get the following result.

\begin{corollary}\label{lem:polytx}
Let $f=f(t,x)$ be a polynomial in $t$ and $x$. Denote by $\bra{\eta, f}$ the limit of the martingale part in the expression of $\bra{\mu_t^{(N)}, f}$, or the limit of $\Phi^{(f'; N)}$ which is coupling with $\{\eta_m\}_m$ such that the joint convergence holds. Then 
\[
	\sqrt N \Big(\bra{\mu^{(N)}_t, f } - \bra{\mu_t, f} \Big) \dto \bra{\xi, f},
\]
and it holds that 
\begin{equation}\label{limit-f}
	\bra{\xi, f} = \bra{\eta, f} + \int_0^t \bralr{\xi,  c \int \frac{f'(s, x) - f'(s, y)}{x - y}d\mu_s(y) + \frac12 f'' + \partial_s f}ds.
\end{equation}
\end{corollary}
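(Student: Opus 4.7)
The plan is to derive the identity~\eqref{limit-f} by integrating It\^o's formula~\eqref{Ito-f} against $f$, subtracting the integrated form of the limit equation~\eqref{integral-equation}, multiplying the difference by $\sqrt{N}$, and passing to the limit term by term. The convergence in distribution of $\sqrt{N}(\bra{\mu_t^{(N)}, f} - \bra{\mu_t, f})$ itself is immediate: writing $f(t,x) = \sum a_{k,n} t^k x^n$, this quantity equals $\sum a_{k,n} t^k \tilde S_n^{(N)}(t)$, so Theorem~\ref{thm:joint-moment} combined with the continuous mapping theorem yields convergence in $C([0,T])$ to $\sum a_{k,n} t^k \xi_n(t)$, which is precisely $\bra{\xi, f}$ by the linearity convention fixed just before the corollary.

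For the identity, I first observe that $\bra{\mu_0^{(N)}, f} = f(0,0) = \bra{\mu_0, f}$ because $\lambda_i(0) = 0$ for every $i$ and $\mu_0 = \delta_0$. Integrating~\eqref{Ito-f} and~\eqref{integral-equation} over $[0, t]$, subtracting, and multiplying by $\sqrt{N}$ gives
\begin{align*}
  \sqrt{N}\bigl(\bra{\mu_t^{(N)}, f} - \bra{\mu_t, f}\bigr)
  &= \Phi^{(f';N)}(t) + \int_0^t \sqrt{N}\, \bralr{\mu_s^{(N)} - \mu_s,\, \tfrac12 f'' + \partial_s f}\, ds \\
  &\quad + \frac{c}{2} \int_0^t \sqrt{N} \iint \frac{f'(s,x) - f'(s,y)}{x - y}\bigl[d\mu_s^{(N)}(x) d\mu_s^{(N)}(y) - d\mu_s(x) d\mu_s(y)\bigr] ds \\
  &\quad - \frac{c}{2\sqrt{N}} \int_0^t \bra{\mu_s^{(N)}, f''}\, ds.
\end{align*}
The final $O(1/\sqrt{N})$ term vanishes in probability because $\bra{\mu_s^{(N)}, f''}$ is bounded in probability uniformly on $[0, T]$ by Theorem~\ref{thm:Gaussian-LLN}.

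To pass to the limit in the remaining three pieces, I use the continuity of the integration functional $C([0, T]) \ni x \mapsto \int_0^{\cdot} x(s)\, ds$ together with joint convergence of all integrands with the martingale $\Phi^{(f'; N)}$. Expanding $\tfrac12 f'' + \partial_s f = \sum b_{k,n} s^k x^n$, the first integrand reads $\sum b_{k,n} s^k \tilde S_n^{(N)}(s)$ and converges by Theorem~\ref{thm:joint-moment} to $\sum b_{k,n} s^k \xi_n(s) = \bra{\xi,\, \tfrac12 f'' + \partial_s f}$. Expanding $\frac{f'(s,x)-f'(s,y)}{x-y}$ as a linear combination of symmetric pairs $s^k(x^{n_1} y^{n_2} + x^{n_2} y^{n_1})$, Lemma~\ref{lem:product} applied termwise, together with the linearity argument displayed just above the corollary statement, shows that the double-integral integrand converges to $\bralr{\xi,\, 2\int \frac{f'(s,x) - f'(s,y)}{x-y} d\mu_s(y)}$; combined with the prefactor $c/2$, the factor $2$ produces the $c$ in~\eqref{limit-f}. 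Lemma~\ref{lem:joint} supplies $\Phi^{(f'; N)} \dto \bra{\eta, f}$.

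The main point requiring care is that the four limits just identified must hold \emph{jointly} (and coupled with the full family $\{\xi_m\}$), not merely marginally, since~\eqref{limit-f} asserts a pathwise relation among them. This joint convergence is obtained by enlarging the tuple of processes tracked in the induction of the proof of Theorem~\ref{thm:joint-moment} so as to include $\Phi^{(f'; N)}$ and every auxiliary $\Phi^{(f_k; N)}$ produced by the monomial expansions above; Lemma~\ref{lem:joint} delivers joint convergence of this enlarged martingale vector, Lemma~\ref{lem:product} handles the bilinear pieces, and no idea beyond the continuous-mapping-plus-Slutsky step already used in Theorem~\ref{thm:joint-moment} is needed to propagate the joint convergence through the finitely many additional monomial test functions introduced by $f$.
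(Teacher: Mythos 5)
Your proposal is correct and takes essentially the same route as the paper: decompose $\sqrt N(\bra{\mu_t^{(N)},f}-\bra{\mu_t,f})$ via the It\^o formula \eqref{Ito-f}, identify the martingale part with $\Phi^{(f';N)}$ via Lemma~\ref{lem:joint}, treat the quadratic term by expanding $\frac{f'(s,x)-f'(s,y)}{x-y}$ into symmetric monomial pairs and applying Lemma~\ref{lem:product}, and handle the remaining linear terms and the $O(1/\sqrt N)$ remainder via Theorem~\ref{thm:joint-moment}. The only difference is that you make explicit the joint-convergence bookkeeping (enlarging the martingale vector in the induction) that the paper leaves implicit in its one-line proof.
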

\begin{proof}
	The formula~\eqref{Ito-f}, together with the above formulation, immediately yields the desired result.
\end{proof}

Next, we borrow an idea from \cite{Cabanal-French} to identify limiting processes more explicitly.
The idea is to choose orthogonal polynomials with respect to $\nu_c$. Let us extract here a useful result on orthogonal polynomials (see \cite[Chapter 2]{Deift-book-1999} or \cite[\S 3.8]{Simon}, for example). From two sequences $a_n \in \R$ and $b_n > 0$, a Jacobi matrix $J$ is formed by 
\[
	J = \begin{pmatrix}
		a_1	&b_1	\\
		b_1	&a_2		&b_2\\
		0	&b_2		&a_3		&b_3\\
		&&\ddots	&\ddots	&\ddots
	\end{pmatrix}.
\]
Then there is a probability measure $\mu$ such that 
\[
	\bra{\mu, x^n} = J^n(1,1), \quad n = 0, 1, \dots.
\]
In case there is a unique probability measure satisfying the above moment condition, or the measure $\mu$ is determined by moments, we call $\mu$ the spectral measure of $J$. The existence follows from Theorem 3.8.4 in \cite{Simon} while Corollary 3.8.4 therein provides a useful sufficient condition for the uniqueness, that is, the uniqueness holds, if $\sum_{n=1}^\infty b_n^{-1} = \infty$.

The spectral measure $\mu$ is related to orthogonal polynomials as follows. Define a sequence of polynomials $\{p_n\}$ as 
\begin{align*}
	&p_0 = 1, \quad p_1 = x - a_1,\\
	&p_{n+1} = xp_n - a_{n+1}p_n - b_n^2 p_{n-1}, \quad (n \ge 1).
\end{align*}
Then $\{p_n\}$ are orthogonal polynomials with respect to the spectral measure $\mu$,
\begin{equation}\label{orthogonal}
	\int p_m (x) p_n(x) d\mu(x) = \delta_{mn} \prod_{i=1}^n b_i^2,
\end{equation}
where $\delta_{mn}=1$, if $m=n$ and $\delta_{mn}=0$, if $m \neq n$.

Back to our problem, we need the fact that the probability measure $\nu_c$ is the spectral measure of the following Jacobi matrix \cite{DS15}
\[
	J = \begin{pmatrix}
		0	&\sqrt{c+1}\\
		\sqrt{c+1}	&0	&\sqrt{c+2}\\
			&\sqrt{c+2}	&0	&\sqrt{c+3}\\
		&&\ddots	&\ddots	&\ddots
	\end{pmatrix}.
\]
Then polynomials $\{p_n\}$ recursively defined as 
\begin{align*}
	&p_0 = 1, \quad p_1 = xp_0 = x, \\
	&p_{n+1} = xp_n - (n+c)p_{n-1}, \quad (n \ge 1),
\end{align*}
are orthogonal polynomials with respect to $\nu_c$, 
\begin{equation}\label{associated-Hermite}
	\bra{p_n, p_m}_{\nu_c} := \int p_n(x) p_m(x) d\nu_c(x) = \delta_{mn} \prod_{i=1}^n (c+i).
\end{equation}
When $c=0$, $\{p_n\}$ are nothing but probabilist's Hermite polynomials. For general $c$, polynomials $\{p_n\}$ are defined by shifting the coefficients in the recurrence relation by $c$, and thus, they are called associated Hermite polynomials.

Note that the polynomial $p_n$ is odd for odd $n$ and even for even $n$. Then we can choose a primitive $P_n$ of $p_n$, a polynomial of order $(n+1)$, to be either an odd or even polynomial. Define 
\begin{equation}\label{def-of-Pn}
	\tilde P_n(t, x) = t^{(n+1)/2} P_n(x/\sqrt t).
\end{equation}
Then $\tilde P_n(t,x)$ is a polynomial in $t$ and $x$. Now Corollary~\ref{lem:polytx} implies that 
\begin{align}
	&\sqrt N \Big(\bra{\mu^{(N)}_t, \tilde P_n(t, x) } - \bra{\mu_t, \tilde P_n(t, x)} \Big) \notag\\
	& \dto \bra{\xi, \tilde P_n} 
	= \bra{\eta, \tilde P_n} + \int_0^t \bralr{\xi,  c \int \frac{\tilde P_n'(s, x) - \tilde P_n'(s, y)}{x - y}d\mu_s(y) + \frac12 \tilde P_n'' + \partial_s \tilde P_n}ds.\label{limit-Pn}
\end{align}
As proved latter in Theorem~\ref{thm:Gaussian-CLT-orthogonal}, by working with $\tilde P_n$, the limiting processes $\{\bra{\eta, \tilde P_n}\}_{n \ge 0}$ are independent.

\begin{lemma}\label{lem:Gaussian-vanishing}
	For each $n$, it holds that 
\[
	c \int \frac{\tilde P'_n(s, x) - \tilde P'_n(s, y)}{x - y} d\mu_s(y) + \frac12 \tilde P''_n + \partial_s \tilde P_n = const\times s^{(n-1)/2}.
\]
Consequently, the last term in the equation \eqref{limit-Pn} vanishes.
\end{lemma}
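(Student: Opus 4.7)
The first step is to make the scaling of $\mu_s$ explicit. Since $\mu_s$ is the dilation of $\nu_c$ by $\sqrt s$, under the substitution $z = y/\sqrt s$ one has $d\mu_s(y) = d\nu_c(z)$. Writing $u = x/\sqrt s$ and using $P_n(s,x) = s^{(n+1)/2} P_n(u)$, short computations yield $P_n'(s,x) = s^{n/2} p_n(u)$, $P_n''(s,x) = s^{(n-1)/2} p_n'(u)$, and $\partial_s P_n(s,x) = s^{(n-1)/2}\bigl[\tfrac{n+1}{2} P_n(u) - \tfrac{u}{2} p_n(u)\bigr]$. Substituting these into the left-hand side of the lemma collapses it to $s^{(n-1)/2}\,Q_n(u)$, where
\[
Q_n(u) := c \int_{\R} \frac{p_n(u)-p_n(z)}{u-z}\, d\nu_c(z) + \tfrac12 p_n'(u) + \tfrac{n+1}{2} P_n(u) - \tfrac{u}{2} p_n(u).
\]
It therefore suffices to show that $Q_n(u)$ is independent of $u$: the resulting quantity $s^{(n-1)/2}\cdot\text{const}$ depends only on $s$, and pairing with $\bra{\xi,\cdot}$ (which is linear in the test function and annihilates $1$, since $\xi_0 \equiv 0$) makes the last term of \eqref{limit-Pn} vanish.

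I plan to prove $Q_n' \equiv 0$ by induction on $n$. The cases $n=0,1$ follow by direct substitution of $p_0=1$ and $p_1=u$. For the inductive step, introduce the operator $L[f](u) := c\int (f(u)-f(z))/(u-z)\,d\nu_c(z)$ on polynomials $f$, so that $L[p_n]$ is the first term of $Q_n$. Using the recurrence $p_n = u p_{n-1} - (n-1+c) p_{n-2}$, the algebraic identity $(u\,p_{n-1}(u)-z\,p_{n-1}(z))/(u-z) = u\cdot (p_{n-1}(u)-p_{n-1}(z))/(u-z) + p_{n-1}(z)$, and the orthogonality $\int p_{n-1}\,d\nu_c = 0$ (valid for $n \ge 2$), I obtain the three-term recurrence
\[
L[p_n](u) = u\,L[p_{n-1}](u) - (n-1+c)\,L[p_{n-2}](u), \qquad n \ge 2.
\]
Differentiating $Q_n$, substituting the above together with the recurrences for $p_n$, $p_n'$, $p_n''$, and using the induction hypotheses $Q_{n-1}' \equiv 0$ and $Q_{n-2}' \equiv 0$ to replace $L[p_{n-1}]'$ and $L[p_{n-2}]'$ by polynomial expressions, the bulk of the terms should telescope, leaving
\[
Q_n'(u) = L[p_{n-1}](u) + p_{n-1}'(u) - (n-1+c)\,p_{n-2}(u).
\]

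The key non-obvious point---and the main obstacle---is then the auxiliary identity
\[
L[p_m](u) + p_m'(u) = (m+c)\,p_{m-1}(u), \qquad m \ge 1,
\]
which I would isolate and prove by a separate induction on $m$. The cases $m = 1, 2$ are direct. For the inductive step, combining the recurrence for $L[p_m]$ with $p_m' = p_{m-1} + u p_{m-1}' - (m-1+c) p_{m-2}'$ yields
\[
L[p_m] + p_m' = u\bigl(L[p_{m-1}] + p_{m-1}'\bigr) - (m-1+c)\bigl(L[p_{m-2}] + p_{m-2}'\bigr) + p_{m-1}.
\]
Applying the induction hypothesis and the recurrence $p_{m-1} = u p_{m-2} - (m-2+c) p_{m-3}$ then collapses the right-hand side to $(m-1+c)p_{m-1} + p_{m-1} = (m+c)p_{m-1}$, closing the auxiliary induction. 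Feeding this back into the displayed formula for $Q_n'$ closes the main induction. The crux is guessing the auxiliary identity; once spotted, both inductions are essentially driven by the three-term recurrence of the associated Hermite polynomials and the orthogonality of $p_n$ against lower-degree polynomials with respect to $\nu_c$.
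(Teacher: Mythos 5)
Your proposal is correct and follows essentially the same route as the paper: reduce by the scaling $d\mu_s(y)=d\nu_c(y/\sqrt s)$ to a statement about $p_n$, $P_n$ and the Cauchy-type transform, then kill the derivative by induction on the three-term recurrence, with your auxiliary identity $L[p_m]+p_m'=(m+c)p_{m-1}$ being exactly the paper's relation $p_m'+cq_m-(m+c)p_{m-1}=0$ (since $L[p_m]=cq_m$). The only discrepancy is the power of $s$, where your $s^{(n-1)/2}$ is in fact the correct exponent (the paper's $s^{n/2}$ is a harmless typo, as the $n=1$ case shows), and this does not affect the conclusion that the term is constant in $x$ and hence annihilated by $\bra{\xi,\cdot}$.
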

\begin{proof}
It is clear that
\begin{align}
	&\tilde P_n'(s, x) = s^{n/2 } P_n'(x/\sqrt s) = s^{n/2 } p_n(x/\sqrt s),	\label{derivative-Pn}\\
	&\tilde P''_n(s, x) = s^{(n-1)/2} p'_n(x/\sqrt s), \notag\\
	&\partial_s \tilde P_n(s, x) = \frac{n+1}2 s^{(n-1)/2} P_n(x/\sqrt s) -\frac12 s^{(n-2)/2} x p_n(x/\sqrt s). \notag
\end{align}
Next, by using the property that $\int f(x/\sqrt s) d\mu_s(x) = \int f(x) d\nu_c(x) $, we obtain that 
\[
	c \int \frac{\tilde P'_n(s, x) - \tilde P'_n(s, y)}{x - y} d\mu_s(y) + \frac12 \tilde P''_n + \partial_s \tilde P_n = s^{(n-1)/2} F(x/\sqrt s),
\]
where
\begin{equation}\label{relation-p}
	F(x) = c \int \frac{p_n(x) - p_n(y)}{x - y} d\nu_c(y) + \frac12 p_n'(x) + \frac {n+1}2 P_n(x) - \frac12 xp_n(x).
\end{equation}
Thus, it suffices to show that $F(x) = const$.
%\begin{equation}\label{relation-p}
%	c \int \frac{p_n(x) - p_n(y)}{x - y} d\nu_c(y) + \frac12 p_n'(x) + \frac {n+1}2 P_n(x) - \frac12 xp_n(x) = const.
%\end{equation}

Define a sequence $\{q_n\}$ as
\[
	q_n(x) = \int \frac{p_n(x) - p_n(y)}{x - y} d\nu_c(y).
\]
Then it is straightforward to check that $\{q_n\}$ are polynomials satisfying the same recurrence relation as $\{p_n\}$ but with different initial conditions
\[
	q_{n+1} = x q_n - (n+c) q_{n-1}, \quad (n \ge 1), \quad q_0 = 0, q_1 = 1.  
\]
Now the first term of $F(x)$ in the equation~\eqref{relation-p} is equal to $c q_n(x)$. To prove that $F(x) = const$, we show that its derivative is zero, that is, 
\begin{equation}\label{derivative}
	c q_n'(x) +  \frac12 p_n''(x) + \frac {n}2 p_n(x) - \frac12 x p_n'(x)  = 0.
\end{equation}
This relation can be proved by induction using the recurrence relation of $\{p_n\}$ and $\{q_n\}$. We omit detailed arguments but noting that in the proof, we need another relation  
\[
	p_n' + cq_n - (n+c)p_{n-1} = 0,
\]
which can be easily proved by induction. The lemma is proved.
\end{proof}

\begin{theorem}\label{thm:Gaussian-CLT-orthogonal}
For any $M\in \N$, 
$\{\sqrt N (\bra{\mu_t^{(N)}, \tilde P_{n}(t, x)} - \bra{\mu_t, \tilde P_{n}(t, x)}) \}_{n=0}^M$ jointly converge in distribution to $\{\tilde \eta_n(t)\}_{n=0}^M$, 
where $\{\tilde \eta_n(t)\}$ are independent Gaussian processes with mean zero and  covariance 
\begin{equation}\label{covariance}
	\Ex [\tilde\eta_m (s) \tilde \eta_n(t)] =\delta_{mn} \frac{\bra{p_n, p_n}_{\nu_c}}{n+1} (s \wedge t)^{n+1}.
\end{equation}
In particular, for the Gaussian beta ensemble~\eqref{GbE} (the distribution of the eigenvalue process $\{\lambda_i(t)\}$ at time $t = 1$), as $N \to \infty$ with $\beta N = 2c$,
\[
	\sqrt N \Big(\bra{L_{N}, P_{n}}  - \bra{\nu_c, P_{n}}\Big )\dto \Normal(0, \sigma_{P_{n}}^2) ,
\]
with 
\[
	\sigma^2_{P_n} = \frac{1}{n+1} \bra{p_n, p_n}_{\nu_c} = \frac{1}{n+1} (c+1) \cdots (c + n).
\]
For the joint convergence, they converge to independent Gaussian random variables.
\end{theorem}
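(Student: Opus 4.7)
The plan is to apply Corollary~\ref{lem:polytx} to the test functions $P_n(t,x)$, use Lemma~\ref{lem:Gaussian-vanishing} to collapse the drift in the resulting integral equation, and then identify the remaining martingale limit from Lemma~\ref{lem:joint} together with the orthogonality of $\{p_n\}$ against $\nu_c$.

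First, applied to $f = P_n(t,x)$, Corollary~\ref{lem:polytx} gives
\[
	\sqrt N \Big(\bra{\mu_t^{(N)}, P_n(t,x)} - \bra{\mu_t, P_n(t,x)}\Big) \dto \bra{\xi, P_n},
\]
with $\bra{\xi, P_n}$ determined by the integral equation~\eqref{limit-f}. By Lemma~\ref{lem:Gaussian-vanishing}, the integrand $c\int \frac{P_n'(s,x)-P_n'(s,y)}{x-y}d\mu_s(y) + \frac12 P_n''(s,x) + \partial_s P_n(s,x)$ is a function of $s$ alone, independent of $x$. Linearity of $\bra{\xi, \cdot}$ then reduces the $ds$-integral to a scalar multiple of $\bra{\xi, 1}$; but $\bra{\mu_t^{(N)}, 1} = \bra{\mu_t, 1} = 1$ forces $\bra{\xi, 1} = 0$, so the entire drift term vanishes. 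Hence $\bra{\xi, P_n} = \bra{\eta, P_n}$, i.e.\ the limit coincides with the pure martingale limit of $\Phi^{(P_n'; N)}$.

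For joint convergence and the covariance, I invoke Lemma~\ref{lem:joint} with the family $f_n(u,x) := P_n'(u,x) = u^{n/2} p_n(x/\sqrt u)$ for $n = 0, 1, \dots, M$. This yields centered Gaussian processes $\{\tilde\eta_n\}_{n=0}^M$ to which $\{\Phi^{(P_n'; N)}\}_{n=0}^M$ jointly converge, with
\[
	\Ex[\tilde\eta_m(s)\tilde\eta_n(t)] = \int_0^{s\wedge t} u^{(m+n)/2} \bra{\mu_u, p_m(x/\sqrt u)\, p_n(x/\sqrt u)}\, du.
\]
The scaling $\mu_u(dx) = u^{-1/2}\nu_c(x/\sqrt u)\, dx$ reduces the inner bracket to $\bra{\nu_c, p_m p_n}$, which by the orthogonality relation~\eqref{associated-Hermite} equals $\delta_{mn}\bra{p_n,p_n}_{\nu_c}$. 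Combined with $\int_0^{s\wedge t} u^n du = (s\wedge t)^{n+1}/(n+1)$, this reproduces the covariance~\eqref{covariance}, and independence of the $\{\tilde\eta_n\}$ follows from pairwise uncorrelatedness plus joint Gaussianity.

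The one-time CLT at $t=1$ is immediate: since $\{\lambda_i(1)\}$ is distributed as the Gaussian beta ensemble~\eqref{GbE}, $\mu_1^{(N)}$ has the same law as $L_N$; the scaling of $\mu_t$ gives $\mu_1 = \nu_c$; and by~\eqref{def-of-Pn}, $P_n(1,x) = P_n(x)$. Evaluating $\tilde\eta_n(1)$ yields the claimed Gaussian law with variance $\bra{p_n,p_n}_{\nu_c}/(n+1) = (c+1)\cdots(c+n)/(n+1)$, and independence descends from the process level. The only substantive step is the vanishing of the drift in~\eqref{limit-f}, which is precisely Lemma~\ref{lem:Gaussian-vanishing}; granted that, the rest is a direct specialization plus a clean moment computation, so I expect no further obstacle.
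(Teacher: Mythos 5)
Your proposal is correct and follows essentially the same route as the paper: apply Corollary~\ref{lem:polytx} with $f=P_n$, use Lemma~\ref{lem:Gaussian-vanishing} to kill the drift in \eqref{limit-f} (your explicit observation that the integrand depends on $s$ alone and $\bra{\xi,1}=0$ is exactly the reasoning behind the paper's ``consequently'' clause), and then compute the covariance of the martingale limits via Lemma~\ref{lem:joint}, the scaling of $\mu_u$, and the orthogonality relation \eqref{associated-Hermite}. No gaps.
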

\begin{proof}
	Corollary~\ref{lem:polytx} and Lemma~\ref{lem:Gaussian-vanishing} imply the joint convergence and the limiting processes $\bra{\xi, \tilde P_n}$ coincide with the limit of $ \Phi^{(\tilde P_{n}'; N)}$, which we denote by $\tilde \eta_n$.
Then Lemma~\ref{lem:joint} states that the limiting processes $\{\tilde \eta_n\}$ have mean zero and covariance 
\[
	\Ex[\tilde \eta_m(s) \tilde \eta_n(t)] =  \int_0^{s\wedge t} \bra{\mu_u, \tilde P_{m}'(u, x) \tilde P_{n}' (u, x)}du.
\]
It now follows from the formula~\eqref{derivative-Pn} for the derivatives of $\tilde P_m$ and $\tilde P_n$ that  
\[
	\bra{\mu_u, \tilde P_{m}'(u, x) \tilde P_{n}' (u, x)} = u^{(n+m)/2} \bra{\mu_u, p_m(x/\sqrt u) p_n(x/\sqrt u)} =  u^{(n+m)/2} \bra{p_m, p_n}_{\nu_c},
\]
which is zero when $n \neq m$. 
Thus, the covariance of $\tilde \eta_m(s)$ and $\tilde \eta_n(t)$ is given by the equation~\eqref{covariance}. That the covariance is zero when $m\neq n$ implies that Gaussian processes $\{\tilde \eta_m\}$ are independent, which completes the proof of  the first part of this theorem. The second one is just a particular case of the first. The proof is complete.
\end{proof}

\begin{remark}
Let $\tilde p_n = p_n / \sqrt{\bra{p_n, p_n}_{\nu_c}}$ be a sequence of orthonormal polynomials. For a polynomial $F$, we express $f = F'$ in terms of $\{\tilde p_n\}$,
\[
	f = \sum_{n} \bra{f, \tilde p_n}_{\nu_c} \tilde p_n = \sum_n \alpha_n \tilde p_n.
\]
We wish to simplify the limiting variance 
\[
	\sigma_F^2 =  \sum_{n} \frac{1}{n+1} \alpha_n^2 = \sum_{n} \frac{1}{n+1} \bra{f, \tilde p_n}_{\nu_c}^2.
\]
Let us now express the limiting variance as 
\begin{align*}
	\sigma_F^2 &= \sum_{n} \frac{1}{n+1} \bra{f, \tilde p_n}^2\\
	& = \sum_{n} \frac{1}{n+1} \int f(x)\tilde p_n(x) \nu_c(x) dx \int f(y)\tilde p_n(y) \nu_c(y) dy\\
	&= \iint f(x) f(y) \left(\sum_n    \frac{1}{n+1} \tilde p_n(x) \tilde p_n(y) \right) \nu_c(x)  \nu_c(y)  dx dy.
\end{align*}
Note that $\{\tilde p_n\}_{n=0}^\infty$ is an orthonormal basis in the space $L^2(\R, \nu_c)$. It follows that $\{\tilde p_n(x) \tilde p_n(y)\}_{n=0}^\infty$ is an orthonormal system in $L^2(\R^2, \nu_c \otimes \nu_c)$, and thus, 
\[
	K_c(x, y) = \sum_{n=0}^\infty  \frac{1}{n+1} \tilde p_n(x) \tilde p_n(y)
\]
is well-defined with equality in $L^2$. Now for any polynomial $F$, 
\[
	\sigma_F^2 = \iint F'(x) F'(y) K_c(x, y) \nu_c(x) \nu_c(y) dx dy.
\]

We are unable to simplify $K_c(x,y)$ yet. However, in case of Hermite  polynomials (the trivial case where $c = 0$), Mehler's formula gives us that for $\rho \in [0,1)$, 
\[
	 \sum_{n = 0}^\infty  \frac{p_n(x) p_n(y)}{n!} {\rho^n}= \frac{1}{\sqrt{1 - \rho^2}} \exp \left(- \frac{\rho^2(x^2 + y^2) - 2 \rho xy}{2 (1 - \rho^2)} \right) .
\]
Thus, 
\[
	K_0(x, y) = \int_0^1 \frac{1}{\sqrt{1 - \rho^2}} \exp \left(- \frac{\rho^2(x^2 + y^2) - 2 \rho xy}{2 (1 - \rho^2)} \right) d\rho.
\]
\end{remark}

\section{Laguerre case}
This section deals with the Laguerre case. We consider the following (scaled) beta Laguerre ensembles with parameters $\alpha, \beta >0$ whose joint density is proportional to
\begin{equation}\label{LbE}
	\prod_{i<j}|\lambda_j - \lambda_i|^{\beta} \prod_{l=1}^N \lambda_l^{\alpha-1} e^{-\lambda_l}, \quad (\lambda_1, \dots, \lambda_N) \in \Omega_L,
\end{equation}
where
\[
	\Omega_L = \{(\lambda_1, \dots, \lambda_N)  \in \R^N : 0 \le \lambda_1 \le \cdots \le \lambda_N\}.
\]
They are generalizations of the eigenvalue distribution of Wishart and Laguerre matrices in terms of the joint density. Refer to Chapter 1 in \cite{Forrester-book} for some basic properties of these ensembles.

The so-called beta Laguerre processes satisfy the following system of SDEs
\begin{equation}
\begin{cases}
	d\lambda_i(t)=\sqrt{2\lambda_i}  d  b_i(t)   + \alpha  d  t + \frac\beta2 \sum\limits_{j : j \neq i} \dfrac{2\lambda_i(t)}{\lambda_i(t) - \lambda_j(t)} dt,\\
	\lambda_i(0) = 0,
\end{cases}
 i = 1, \dots, N,
\end{equation}
and $(\lambda_1(t), \dots, \lambda_N(t)) \in \Omega_L$, for all $t > 0$,
where $\{b_i(t)\}_{i=1}^N$ are independent standard Brownian motions. For $\beta = 1$ and $\beta = 2$, they are the eigenvalue process of the Wishart process and Laguerre process, respectively \cite{Bru-1989,Bru-1991,Katori-Tanemura-2004,Konig-Oconnell-2001}. For $\beta \ge 1$, the above system of SDEs has a unique strong solution \cite{Graczyk-Jacek-2014}. However, when $\beta \in (0,1)$, we should define beta Laguerre processes as the squared of type B radial Dunkl processes \cite{Demni-2007-arxiv}. For this, the condition $\alpha > 1/2$ is required. Similar to the Gaussian case, the set of $t$ such that $\lambda_i(t) = \lambda_j(t)$, for some $i \neq j$, has Lebesgue measure zero, almost surely. In what follows, we consider a regime where $\alpha > 1/2$ is fixed and $\beta = 2c/N$ for a given constant $c \in (0, \infty)$. Under the above trivial initial condition ($\lambda_i(0) = 0$), for $t > 0$, the joint distribution of $\{\lambda_i(t) / t\}$ coincides with the beta Laguerre ensemble~\eqref{LbE} (cf.\ \cite{Rosler-Voit-1998}).

Let 
\[
	\mu^{(N)}_t = \frac1{N} \sum_{i = 1}^N \delta_{\lambda_i(t)}
\]
be the empirical measure process.
For $f(t, x) \in C^2([0, \infty) \times [0, \infty))$, by It\^o's formula, we obtain
\begin{align}
	d \bra{\mu^{(N)}_t, f} 
	&= \sum_{i = 1}^N \frac1N  \sqrt{2\lambda_i(t)} f'(t, \lambda_i(t)) db_i(t) \notag \\
	&\quad + \bra{\mu^{(N)}_t, \alpha f'(t, x)   + x f''(t, x) + \partial_t f(t, x)}dt \notag\\
	&\quad +  c \iint \frac{xf'(t, x) - yf'(t, y)} {x - y} d\mu^{(N)}_t(x) d\mu^{(N)}_t(y)dt  \notag\\
	&\quad  - \frac cN \bra{\mu^{(N)}_t, xf''(t, x) + f'(t, x)} dt. \label{L-Ito-for-f}
\end{align}
Again, note that the above formula holds when $\lambda_1(t), \dots, \lambda_N(t)$ are distinct.

We follow the same routine as in the Gaussian case. The equation~\eqref{L-Ito-for-f} with $f = x^n$ yields a recurrence relation for the $n$th moment process $S_n^{(N)}$,
\begin{align}
	d S_n^{(N)}(t)  &= \sum_{i = 1}^N \frac{n}{N} \sqrt{2\lambda_i(t)} \lambda_i(t)^{n-1} db_i(t)  \notag \\
	&\quad + \alpha n S_{n-1}^{(N)}(t) dt + c n \sum_{i = 0}^{n-1} S_i^{(N)}(t) S_{n-i-1}^{(N)}(t) dt \notag  \\
	&\quad + n(n-1) S_{n-1}^{(N)}(t) dt - \frac{c n^2}N S_{n-1}^{(N)}(t) dt. \label{L-moment}
\end{align}
Let
\[
	M_n^{(N)}(t) =\frac{n}{N} \sum_{i = 1}^N \int_0^t   \sqrt{2\lambda_i(s)} \lambda_i(s)^{n-1} db_i(s) = \frac{\sqrt 2 n}{N} \sum_{i = 1}^N \int_0^t    \lambda_i(s)^{n-1/2} db_i(s) 
\]
be the martingale part with the quadratic variation 
\begin{equation}\label{quadratic-of-M}
	[M_n^{(N)}](t) = \frac{2n^2}{N}   \int_0^t \frac{\sum_{i = 1}^N \lambda_i(s)^{2n-1}}{N}ds.
\end{equation}

Writing down $S_n^{(N)}(t)$ in the integral form
and using similar induction arguments as in the Gaussian case, we deduce that the $n$th moment process $S_n^{(N)}(t)$ converges to a deterministic limit $m_n(t)$ satisfying 
\begin{equation}\label{ODE-moment}
\begin{cases}
	\partial_t m_n(t) =  n\left( (\alpha + n - 1) m_{n - 1}(t) + c \sum_{i = 0}^{n - 1} m_i(t) m_{n - i - 1}(t) \right),\\
	m_n(0) = 0,
\end{cases}
\quad (n \ge 1).
\end{equation}
Again, these equations lead to a simple solution
\[
	m_n(t) = u_n t^n,
\]
where $\{u_n\}$ are defined recursively as
\[
	u_n =  (\alpha + n - 1) u_{n - 1} + c \sum_{i = 0}^{n - 1} u_i  u_{n - i - 1}, \quad n = 1,2,\dots, (u_0 = 1).
\]
The measure $\nu_{\alpha, c}$ having moments $\{u_n\}$ is unique and its density can be directly calculated from the above relation by a method in \cite{Martin-Kearney-2010}. However, we will use an existing result that $\nu_{\alpha, c}$ is the probability measure of associated Laguerre polynomials of Model II \cite{Trinh-Trinh-2021, Trinh-Trinh-BLP}. Its explicit density can be found in \cite{Ismail-et-al-1988}, or in \cite{Allez-Wishart-2013}.

The limiting probability measure-valued process $(\mu_t)$ is now defined as 
\[
\mu_t(dx) = \frac1{t}\nu_{\alpha, c}(x / t) dx, \quad(t >0),  \mu_0 = \delta_0.
\]
We obtain the following law of large numbers.
\begin{theorem}\label{thm:Laguerre-LLN}
For any polynomial $f(t, x)$ in $t$ and $x$, as $N \to \infty$,
	\[
		\bra{\mu_t^{(N)}, f} \Pto \bra{\mu_t, f}.
	\]
Recall that this is the convergence in probability in the space $C([0, T])$.
Moreover, it holds that 
\begin{equation}\label{integral-equation-L}
	\partial_t \bra{\mu_t, f} = c \iint \frac{xf'(t, x) - yf'(t, y)}{x - y}d\mu_t(x) d\mu_t(y)  + \bra{\mu_t,  \alpha f' + x f''+ \partial_t f}.
\end{equation}
\end{theorem}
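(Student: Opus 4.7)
The plan is to mirror the strategy used for the Gaussian case in Theorem~\ref{thm:Gaussian-LLN}, carrying out an induction on the degree $n$ of the moment process $S_n^{(N)}$ based on the recursion~\eqref{L-moment}. The only genuinely new ingredient is the square root $\sqrt{\lambda_i(t)}$ appearing in the diffusion coefficient, which slightly modifies the martingale estimate but not the logical skeleton.

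First I would dispose of the martingale part. The quadratic variation~\eqref{quadratic-of-M} gives $[M_n^{(N)}](t) = (2n^2/N)\int_0^t \bra{\mu_s^{(N)}, x^{2n-1}} ds$, and Doob's $L^2$ inequality yields
\[
\Prob\bigl(\norm{M_n^{(N)}}_\infty \ge \varepsilon\bigr) \le \frac{2n^2}{\varepsilon^2 N}\int_0^T \Ex\bigl[\bra{\mu_t^{(N)}, x^{2n-1}}\bigr] dt.
\]
Since $\{\lambda_i(t)/t\}$ is distributed as the beta Laguerre ensemble~\eqref{LbE} with $\beta = 2c/N$, one has $\Ex[\bra{\mu_t^{(N)}, x^{2n-1}}] = t^{2n-1}\Ex[\bra{L_N, x^{2n-1}}]$, and in the regime $\beta N \to 2c$ the expected moments of $L_N$ are uniformly bounded in $N$ (this is a standard moment-method computation, already used in \cite{Trinh-Trinh-BLP}). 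Hence $M_n^{(N)} \Pto 0$ in $C([0,T])$.

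Next I would run the induction. The base cases $S_0^{(N)} \equiv 1$ and the integral form of~\eqref{L-moment} for $n=1$ trivially yield convergence to deterministic limits. Assume $S_l^{(N)} \Pto m_l$ in $C([0,T])$ for all $l < n$. Applying Lemma~\ref{lem:additivity} (sums, products, and time integrals preserve convergence in probability in $C([0,T])$) to the integrated version of~\eqref{L-moment}, together with the martingale convergence above and the fact that $(c n^2/N) \int_0^t S_{n-1}^{(N)}(u) du \Pto 0$, gives $S_n^{(N)} \Pto m_n$ where $m_n(t)$ is characterized by the ODE~\eqref{ODE-moment}. A direct induction using the self-similarity of that ODE produces $m_n(t) = u_n t^n$ with $\{u_n\}$ as in the statement. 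The sequence $\{u_n\}$ is identified with the moments of $\nu_{\alpha,c}$ (the measure of associated Laguerre polynomials of Model II) by quoting \cite{Trinh-Trinh-2021, Trinh-Trinh-BLP}; this measure is determined by its moments, so setting $\mu_t(dx) = t^{-1}\nu_{\alpha,c}(x/t)dx$ yields $\bra{\mu_t, x^n} = m_n(t)$.

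Finally, extension to a general polynomial $f(t,x)$ follows by writing $f$ as a finite linear combination of monomials $t^k x^n$ and invoking Lemma~\ref{lem:additivity}(i) once more, and the integral equation~\eqref{integral-equation-L} is read off by passing to the limit in~\eqref{L-Ito-for-f}: the martingale term vanishes, the $O(1/N)$ correction term vanishes, and the double integral against $\mu_t^{(N)} \otimes \mu_t^{(N)}$ converges to the double integral against $\mu_t \otimes \mu_t$ because $(xf'(t,x) - yf'(t,y))/(x-y)$ is still a polynomial in $(t,x,y)$, so the convergence of two-variable polynomial averages reduces to that of one-variable moment processes. The only mild obstacle is bookkeeping in the martingale bound (ensuring the half-integer exponent $2n-1$ in~\eqref{quadratic-of-M} is handled), but uniform boundedness of all integer moments of $L_N$ in the regime $\beta N \to 2c$ settles it.
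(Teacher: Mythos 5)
Your proposal is correct and follows essentially the same route as the paper, which for the Laguerre case only sketches the argument by appealing to ``similar induction arguments as in the Gaussian case''; your write-up simply supplies those details (the Doob bound on the martingale via the integer moment $x^{2n-1}$, the induction using Lemma~\ref{lem:additivity} applied to the integrated form of~\eqref{L-moment}, the identification $m_n(t)=u_n t^n$ with the moments of $\nu_{\alpha,c}$, and passage to the limit in~\eqref{L-Ito-for-f}). No substantive difference from the paper's intended proof.
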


Next, we study the central limit theorem.
Let 
\[
	\tilde S_n^{(N)}(t) = \sqrt N \left( S_n^{(N)}(t) - m_n(t) \right).
\]
Then by using the equations~\eqref{L-moment} and \eqref{ODE-moment}, we deduce that
\begin{align*}
	d \tilde S_n^{(N)}(t) &= d \sqrt N M_n^{(N)}(t)  + n (\alpha + n - 1) \tilde S_{n-1}^{(N)}(t) dt \\
	&\quad + cn \sum_{i=0}^{n-1} \left(\sqrt N S_i^{(N)}(t)S_{n-i-1}^{(N)}(t) - \sqrt N m_i(t) m_{n-i-1}(t) \right) dt\\
		&\quad  - \frac{c n^2}{\sqrt N} S_{n-1}^{(N)}(t) dt .
\end{align*}

We state without proof the following results analogous to the Gaussian case.
\begin{theorem}\label{thm:Laguerre-CLT}
The following hold.
\begin{itemize}
	\item[\rm(i)] For each $n\ge 1$, the martingale part $\sqrt N M_n^{(N)}$ converges in distribution to a Gaussian process $\eta_n$ of mean zero. The joint convergence also holds.
	
	\item[\rm(ii)] $\tilde S_n^{(N)}$ converges in distribution to a Gaussian process $\xi_n$ defined inductively as 
\[
	\xi_n(t) =  \eta_n(t) + n(\alpha + n - 1) \int_0^t \xi_{n-1}(s) ds + 2 cn \sum_{i=0}^{n-1} \left( \int_0^t m_i(s) \xi_{n-i-1}(s) ds \right),
\]
for $n \ge 2,$
with $\xi_0(t) \equiv 0, \xi_1(t) = \eta_1(t)$. The joint convergence also holds.
	
	\item [\rm(iii)] More generally, for any polynomial $f=f(t, x)$, 
	\begin{align*}
		\sqrt{N} \left(\bra{\mu_t^{(N)}, f} - \bra{\mu_t, f} \right) \dto \bra{\xi, f},
	\end{align*}
where the limiting Gaussian processes $\bra{\xi, f}$ satisfies the following relation 
\begin{align}\label{L-xi}
	\bra{\xi, f} &= \bra{\eta, f} \notag\\
	&\quad + \int_0^t \bralr{\xi,  2c \int \frac{x f'(s, x) - y f'(s, y)} {x - y} d\mu_s(y) + \partial_s f +  \alpha f'   + x f''} ds,
\end{align}
with $\bra{\eta, f}$ the limit of the martingale part 
\[
	\frac1{\sqrt N}\sum_{i = 1}^N  \int_0^t  \sqrt{2\lambda_i} f'(s, \lambda_i) db_i(s) \dto \bra{\eta, f}.
\]
Gaussian processes $\bra{\eta, f}$ have mean zero and are coupling with covariance 
\begin{equation}\label{L-Cov}
	\Ex[\bra{\eta, f}(s) \bra{\eta, g}(t)] = 2\int_0^{s\wedge t} \bra{\mu_u, xf'(u, x) g'(u, x)} du.
\end{equation}

\end{itemize}
\end{theorem}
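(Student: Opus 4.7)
The plan is to run the Laguerre versions of Lemma~\ref{lem:joint}, Lemma~\ref{lem:product}, Theorem~\ref{thm:joint-moment} and Corollary~\ref{lem:polytx} in parallel, making only the modifications forced by the multiplicative noise $\sqrt{2\lambda_i}$ and the Laguerre drift. Throughout, Theorem~\ref{thm:Laguerre-LLN} supplies the LLN for every polynomial functional of $\mu_u^{(N)}$ needed to identify limits of (cross-)variations and drifts.

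For part (i), I would apply Rebolledo's martingale central limit theorem, as in the proof of Lemma~\ref{lem:joint}, to the vector of continuous martingales $\{\sqrt N M_n^{(N)}\}_n$. A bilinear version of \eqref{quadratic-of-M} gives the cross-variation
\[
[\sqrt N M_n^{(N)}, \sqrt N M_m^{(N)}](t) = \int_0^t \bra{\mu_u^{(N)}, 2mn \, x^{m+n-1}} du,
\]
which, being the integral of a polynomial functional of $\mu_u^{(N)}$, converges in probability to $\int_0^t \bra{\mu_u, 2mn \, x^{m+n-1}} du$ by Theorem~\ref{thm:Laguerre-LLN}. The conditional Lindeberg hypothesis follows from uniform-in-$N$ bounds on $\Ex \bra{L_N, x^k}$ for the beta Laguerre ensemble in the regime $\beta N \to 2c$, already available from \cite{Trinh-Trinh-BLP}. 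Rebolledo then delivers joint convergence to a continuous centred Gaussian martingale $(\eta_n)$ with the stated covariation; the bilinear extension to arbitrary polynomial test functions is precisely \eqref{L-Cov}.

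For part (ii), induction on $n$ proceeds exactly as in Theorem~\ref{thm:joint-moment}. The integral form of $\tilde S_n^{(N)}$ has four contributions: the martingale $\sqrt N M_n^{(N)}(t)$; the linear term $n(\alpha+n-1)\int_0^t \tilde S_{n-1}^{(N)}(s)\,ds$; a quadratic term $cn\sum_{i=0}^{n-1}\int_0^t \sqrt N\bigl(S_i^{(N)}(s) S_{n-i-1}^{(N)}(s) - m_i(s) m_{n-i-1}(s)\bigr)\,ds$; and a vanishing $O(1/\sqrt N)$ correction $-\tfrac{cn^2}{\sqrt N}\int_0^t S_{n-1}^{(N)}(s)\,ds$, which is $\Pto 0$ by Theorem~\ref{thm:Laguerre-LLN}. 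A verbatim copy of Lemma~\ref{lem:product} yields
\[
\sqrt N \bigl(S_i^{(N)} S_{n-i-1}^{(N)} - m_i m_{n-i-1}\bigr) \dto m_i \xi_{n-i-1} + m_{n-i-1} \xi_i,
\]
and symmetrising $i \leftrightarrow n-i-1$ produces the factor $2cn$ in the recursion defining $\xi_n$. The joint convergence is then propagated through the same continuous-functional $F$ plus Theorem~3.1 of \cite{Billingsley-book-1999} argument as in Theorem~\ref{thm:joint-moment}. Part (iii) follows by linearity: every polynomial $f(t,x)$ is a finite linear combination of monomials $t^k x^n$, and defining $\bra{\xi, f}$ by the corresponding linear combination of $t^k \xi_n(t)$, the continuous mapping theorem gives the weak convergence. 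The identity \eqref{L-xi} is obtained by passing to the limit in \eqref{L-Ito-for-f} term by term: each drift is polynomial in $\mu_u^{(N)}$ and converges by the Laguerre product lemma, while the martingale converges to $\bra{\eta, f}$ by part (i).

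The only genuinely new technical point, compared with Section~2, is the $\sqrt{2\lambda_i}$ factor in the noise. It multiplies every cross-variation integrand by an extra $x$, producing the $2$ in \eqref{L-Cov} and the $2c$ in \eqref{L-xi}, and it raises the polynomial whose mean must be uniformly controlled to $x^{2n-1}$, one half-power higher than in the Gaussian setting. I expect this moment-bound step to be the main obstacle: the Lindeberg check in Rebolledo requires uniform-in-$N$ boundedness of these half-integer-shifted moments for the beta Laguerre ensemble. Once this is extracted from \cite{Trinh-Trinh-BLP} (or proved directly via the tridiagonal model), the remainder of the proof is a direct translation of the Gaussian arguments.
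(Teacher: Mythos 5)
Your proposal is correct and follows exactly the route the paper intends: the paper states Theorem~\ref{thm:Laguerre-CLT} without proof, declaring it ``analogous to the Gaussian case,'' and your argument is precisely that analogy carried out --- Rebolledo for the martingales with the cross-variation $2mn\int_0^t\bra{\mu_u^{(N)},x^{m+n-1}}du$, the product lemma plus symmetrisation giving the $2cn$ factor, and the inductive joint-convergence scheme of Theorem~\ref{thm:joint-moment}. (Two trivial imprecisions that do not affect the proof: the $2c$ in \eqref{L-xi} arises from linearising the quadratic term $c\iint$ of \eqref{L-Ito-for-f} around $\mu_s$, not directly from the noise coefficient, and $x^{2n-1}$ is an ordinary integer moment of the nonnegative Laguerre eigenvalues, one full power above the Gaussian case.)
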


Based on these results, especially based on the covariance formula~\eqref{L-Cov},  in order to study more about the limiting processes $\bra{\xi, f}$, the idea now is to take orthogonal polynomials with respect to $x\nu_{\alpha, c}(dx)$. For that purpose, let us extract some results on the probability measure $\nu_{\alpha, c}$. 
\begin{itemize}
	\item[(i)] The probability measure $\nu_{\alpha, c}$ is the spectral measure of the following Jacobi matrix 
\[
	J_{\alpha, c} = 
	\begin{pmatrix}
		c_1\\
		d_1	& c_2\\
		&\ddots	&\ddots
	\end{pmatrix}
	\begin{pmatrix}
		c_1 & d_1	\\
		&c_2		&d_2\\
		&&\ddots	&\ddots
	\end{pmatrix}
	=\begin{pmatrix}
		c_1^2 	&c_1 d_1	\\
		c_1 d_1	&c_2^2 + d_1^2		&c_2d_2\\
		&\ddots	&\ddots	&\ddots
	\end{pmatrix},
\]
where 
\[
	c_n = \sqrt{\alpha + c + n - 1},  \quad d_n = \sqrt{c + n}. 
\]

\item[(ii)] The explicit formula for the density $\nu_{\alpha, c}(x)$ is given by 
\[
	\nu_{\alpha, c}(x) = \frac{1}{\Gamma(c+1) \Gamma(c+\alpha)} \frac{x^{\alpha-1} e^{-x}}{|\Psi(c, 1-\alpha; x e^{-i \pi})|^2}, \quad x \ge 0.
\]
Here $\Psi(a, b; z)$ is Tricomi's confluent hypergeometric function.
This is Model II of associated Laguerre polynomials \cite{Ismail-et-al-1988}.

\item[(iii)] The probability measure $\tilde \nu_{\alpha, c}(dx) = \frac{1}{\alpha + c} x \nu_{\alpha, c}(x) dx$ is the probability measure of Model I of associated Laguerre polynomials, that is, $\tilde \nu_{\alpha, c}(dx)$ is the spectral measure of the following Jacobi matrix 
\begin{equation}\label{Jacobi-matrix-I}
	\tilde J_{\alpha, c} 
	=\begin{pmatrix}
		c_2^2 + d_0^2	&c_2 d_1	\\
		c_2 d_1	&c_2^2 + d_2^2		&c_3d_2\\
		&\ddots	&\ddots	&\ddots
	\end{pmatrix}.
\end{equation}
This observation is based on the explicit formulae for the two models of associated Laguerre polynomials in \cite{Ismail-et-al-1988}. We give another explanation in Remark~\ref{rem:associated} below.
\end{itemize}

From the Jacobi matrix $\tilde J_{\alpha, c}$, we define orthogonal polynomials with respect to $x\nu_{\alpha}$ or $\tilde \nu_{\alpha, c}$ as follows
\begin{align}
	&p_0 = 1, \quad p_1(x) = x - (c_2^2 + d_0^2) = x - (\alpha + 2c + 1),\notag\\
	&p_{n+1} = x p_n - (c_{n+1}^2 + d_{n+1}^2) p_n - c_{n+1}^2 d_n^2 p_{n-1}\notag \\
	&\qquad= x p_n - (\alpha + 2c + 2n + 1)p_n - (\alpha + c + n)(c + n)p_{n-1}, \quad (n \ge 1).\label{Laguerre-polynomials}
\end{align}
The orthogonal relations are given by
\[
	\int p_m (x) p_n(x) d\tilde \nu_{\alpha, c}(x) = \delta_{mn} \prod_{i=1}^n (c_{i+1}^2 d_i^2) = \delta_{mn}  \prod_{i=1}^n (c+i) (\alpha + c + i).
\]

Let $P_n$ be a primitive of $p_n$. Define  
\[
	\tilde P_n(t, x) := t^{n+1} P_n\left (\frac{x}{ t} \right).
\]
Again, the idea of working with $\tilde P_n(t,x)$ is to make the limiting processes $\{\bra{\eta, \tilde P_n}\}$ independent.
\begin{lemma}
	The last term in the equation~\eqref{L-xi} vanishes for $f = \tilde P_n(t, x)$.
\end{lemma}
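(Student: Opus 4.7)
The strategy parallels Lemma~\ref{lem:Gaussian-vanishing}. Set $u = x/s$, $v = y/s$. Using the scaling $P_n(s, x) = s^{n+1} P_n(x/s)$ one computes directly
\[
	P_n'(s, x) = s^n p_n(u), \quad P_n''(s, x) = s^{n-1} p_n'(u), \quad \partial_s P_n(s, x) = (n+1) s^n P_n(u) - s^n u\, p_n(u),
\]
and $\mu_s(dy) = \nu_{\alpha, c}(v)\, dv$ yields
\[
	\int \frac{x P_n'(s, x) - y P_n'(s, y)}{x - y}\, d\mu_s(y) = s^n q_n(u), \qquad q_n(u) := \int \frac{u p_n(u) - v p_n(v)}{u - v}\, d\nu_{\alpha, c}(v).
\]
Consequently, the expression inside $\bra{\xi, \cdot}$ in~\eqref{L-xi} evaluated at $f = P_n$ equals $s^n G_n(u)$, where
\[
	G_n(u) = 2c\, q_n(u) + (n+1) P_n(u) + (\alpha - u)\, p_n(u) + u\, p_n'(u).
\]
Because $\bra{\mu_s^{(N)}, 1} - \bra{\mu_s, 1} \equiv 0$, the linear functional $\bra{\xi, \cdot}$ kills any function depending only on $s$, so it suffices to show that $G_n(u)$ is constant in $u$. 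Differentiating, this amounts to
\[
	\Phi_n(u) := 2c\, q_n'(u) + n\, p_n(u) + (\alpha + 1 - u)\, p_n'(u) + u\, p_n''(u) \equiv 0,
\]
which is precisely the classical Laguerre ODE at $c = 0$.

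\medskip

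To prove $\Phi_n \equiv 0$, I plan a simultaneous induction on $n$ together with the companion identity
\[
	\Psi_n(u) := u\, p_n'(u) - (c + n)\, p_n(u) - (\alpha + c + n)(c + n)\, p_{n-1}(u) + c\, q_n(u) \equiv 0, \quad n \ge 0.
\]
A preliminary step, using the algebraic decomposition $\frac{u^2 p_n(u) - v^2 p_n(v)}{u - v} = u \cdot \frac{u p_n(u) - v p_n(v)}{u - v} + v p_n(v)$, the three-term recurrence~\eqref{Laguerre-polynomials}, and the orthogonality $\int v p_n(v)\, d\nu_{\alpha, c}(v) = 0$ for $n \ge 1$ (which is $p_n \perp 1$ in $L^2(\tilde\nu_{\alpha, c})$), shows that $q_n$ satisfies the same recurrence $q_{n+1} = (u - a_n) q_n - b_n q_{n-1}$ for $n \ge 1$, with $a_n = \alpha + 2c + 2n + 1$, $b_n = (\alpha + c + n)(c + n)$, and $q_0 = 1$, $q_1(u) = u - c - 1$ computed directly (the shift in $q_1$ reflecting the nonzero first moment $\int v\, d\nu_{\alpha, c} = \alpha + c$). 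Granted this, the cases $n = 0, 1$ of $\Phi_n, \Psi_n$ are verified by inspection, and the induction step for $\Phi_{n+1}$ is the clean decomposition
\[
	\Phi_{n+1}(u) = (u - a_n) \Phi_n(u) - b_n \Phi_{n-1}(u) + 2 \Psi_n(u),
\]
obtained by plugging the recurrences into the definition of $\Phi_{n+1}$; the induction step for $\Psi_{n+1}$ reduces, after using the recurrences for both $p$ and $q$ and rewriting $b_{n-1} p_{n-2} = (u - a_{n-1}) p_{n-1} - p_n$, to the elementary polynomial check $a_n + b_n = b_{n+1}$, which is immediate from the explicit formulas.

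\medskip

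The main obstacle is the algebraic bookkeeping rather than any conceptual difficulty: compared with the Gaussian case, there is an extra parameter $\alpha$, the diagonal entries $a_n$ of the Jacobi matrix are nontrivial, and the first moment $\int v\, d\nu_{\alpha, c}$ is nonzero, so $q_n$ is a nontrivial shift of the usual second-kind polynomial associated with $p_n$. Organizing the computation as a simultaneous induction on both $\Phi_n$ and $\Psi_n$, with $a_n + b_n = b_{n+1}$ as the single algebraic input, contains all the content of the proof.
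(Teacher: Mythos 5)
Your proof is correct and follows essentially the same route as the paper's (sketched) argument: reduce to showing a polynomial identity in $u$ is constant, differentiate, and establish the resulting first-order identity by a simultaneous induction with the companion relation $\Psi_n\equiv 0$, which is exactly the paper's ``further relation'' $xp_n'-np_n+c(c+\alpha)q_n-(\alpha+c+n)(c+n)p_{n-1}=0$ after translating your $q_n$ into the paper's $q_n$ via $q_n^{\mathrm{yours}}=p_n+(\alpha+c)q_n^{\mathrm{paper}}$. Your write-up actually supplies more detail than the paper (the explicit decomposition $\Phi_{n+1}=(u-a_n)\Phi_n-b_n\Phi_{n-1}+2\Psi_n$ and the key identity $a_n+b_n=b_{n+1}$), but the underlying method is identical.
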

\begin{proof}
A direct calculation shows that 
\begin{align*}
	&\tilde P_n'(t, x)  = t^{n} p_n(\frac xt),\\
	&\tilde P_n''(t, x)  = t^{n-1} p_n'(\frac xt),\\
	&\partial_t \tilde P_n(t, x) = (n+1) t^{n} P_n(\frac xt) - x t^{n-1} p_n(\frac xt).
\end{align*}
Thus, if suffices to show that 
\begin{equation}\label{L-p}
	2c \int \frac{x p_n(x) - y p_n(y)}{x - y} d\nu_{\alpha, c}(y) + \alpha p_n(x) + x p'_n(x) + (n+1) P_n(x) - x p_n(x) = const.
\end{equation}

We only sketch main steps in the proof of the above equation. 

Step 1. Define polynomials $\{q_n\}$ as
\[
	q_n := \int\frac{p_n(x) - p_n(y)}{x - y} \tilde \nu_{\alpha, c}(y) dy.
\]
Then $\{q_n\}$ satisfy the same recurrence relation as $\{p_n\}$ but with different initial conditions $q_0 = 0, q_1 = 1$. We now express the integral in the first term of the equation~\eqref{L-p} in terms of $p_n$ and $q_n$, 
\begin{align*}
	&\int \frac{x p_n(x) - y p_n(y)}{x - y} d\nu_{\alpha, c}(y) \\
	&\quad = \int \frac{xp_n(x) - yp_n(x) + yp_n(x)- yp_n(y)}{x - y} d\nu_{\alpha, c}(y) \\
	&\quad =p_n(x) + \int \frac{p_n(x) - p_n(y)}{x - y}(\alpha + c)d\tilde \nu_{\alpha, c}(y)\\
	&\quad =p_n(x) + (\alpha + c)q_n(x).
\end{align*}
This enables us to show the equation~\eqref{L-p} by induction.

Step 2. The equation~\eqref{L-p} is equivalent to the following obtained by taking its derivative 
\[
	(\alpha + 2c + 1)p_n' + 2c(\alpha + c)q_n' + xp_n'' + np_n - xp_n' = 0.
\] 
When showing that relation by induction, we need a further relation that
\[
	x p_n' - n p_n + c(c+\alpha)q_n - (\alpha + c + n)(c + n)p_{n-1} = 0
\]
which can be showed easily by induction. The proof is complete.
\end{proof}
We arrive at main results for the Laguerre case. Their detailed proofs are omitted because arguments are quite similar to the Gaussian case.
\begin{theorem}\label{thm:Laguerre-CLT-orthogonal}
	For any $M$, 
\[
	\Big\{\sqrt N \Big(\bra{\mu_t^{(N)}, \tilde P_n(t, x)} - \bra{\mu_t, \tilde P_n(t, x)} \Big)\Big\}_{n=0}^M
\]
jointly converge to independent Gaussian processes $\tilde \eta_n(t)$ of mean zero and  covariance
\begin{align*}
	\Ex[\tilde \eta_n (s) \tilde \eta_n(t)] &= \frac{(s\wedge t)^{2n + 2}}{n+1} \int p_n(x)^2 x d\mu_{\alpha, c}(x) \\
	&=  \frac{(s\wedge t)^{2n + 2} (\alpha + c)}{n+1} \int p_n(x)^2 d\tilde \mu_{\alpha, c}(x)\\
	&= \frac{(s\wedge t)^{2n + 2} (\alpha + c) }{n+1}\prod_{i=1}^n (c+i) (\alpha + c +i).
\end{align*}
\end{theorem}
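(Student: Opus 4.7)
The plan is to mirror the argument of Theorem~\ref{thm:Gaussian-CLT-orthogonal}, combining Theorem~\ref{thm:Laguerre-CLT} with the vanishing lemma proved just above. First I apply Theorem~\ref{thm:Laguerre-CLT}(iii) with $f = P_n(t, x)$ to obtain, jointly for $n = 0, \dots, M$,
\[
	\sqrt{N}\Big(\bra{\mu_t^{(N)}, P_n(t, x)} - \bra{\mu_t, P_n(t, x)}\Big) \dto \bra{\xi, P_n},
\]
where the limiting process $\bra{\xi, P_n}$ is characterised by the integral identity~\eqref{L-xi}. The preceding lemma ensures that the last term in~\eqref{L-xi} vanishes for this particular choice of $f$, so $\bra{\xi, P_n} = \bra{\eta, P_n}$, the pure martingale limit. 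Consequently, the joint convergence of the $M+1$ centred functionals reduces to the joint convergence of the martingale limits $\{\bra{\eta, P_n}\}_{n=0}^M$, which is built into Theorem~\ref{thm:Laguerre-CLT}.

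Next, I compute the covariance from~\eqref{L-Cov}. From $P_n(t, x) = t^{n+1} P_n(x/t)$ one has $P_n'(u, x) = u^n p_n(x/u)$, and the scaling $\mu_u(dx) = u^{-1}\nu_{\alpha, c}(x/u)\, dx$ together with the substitution $y = x/u$ gives
\[
	\bra{\mu_u, x\, P_m'(u, x) P_n'(u, x)} = u^{m+n+1} \int y\, p_m(y) p_n(y)\, d\nu_{\alpha, c}(y).
\]
Since $\tilde\nu_{\alpha, c}(dx) = (\alpha + c)^{-1} x\, \nu_{\alpha, c}(x)\, dx$ and $\{p_n\}$ are orthogonal with respect to $\tilde\nu_{\alpha, c}$ by~\eqref{Laguerre-polynomials}, the inner integral equals $\delta_{mn}(\alpha + c)\prod_{i=1}^n (c+i)(\alpha + c + i)$. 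Plugging this back into~\eqref{L-Cov} and evaluating $2\int_0^{s \wedge t} u^{2n+1}\, du = (s \wedge t)^{2n+2}/(n+1)$ yields the stated covariance.

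Finally, vanishing of the off-diagonal covariances together with joint Gaussianity of the limits forces independence of $\{\tilde\eta_n\}_{n=0}^M$. The only delicate ingredient is the vanishing lemma proved above, which identifies the associated Laguerre primitives $\{P_n\}$ as exactly the functionals that annihilate the drift in~\eqref{L-xi}; once that has been established, everything reduces to the scaling and orthogonality calculations just sketched, in direct parallel to the Gaussian proof. I therefore do not anticipate any further conceptual obstacle.
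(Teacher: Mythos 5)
Your proposal is correct and follows essentially the same route the paper intends: the paper omits the detailed proof, stating it parallels the Gaussian case (Theorem~\ref{thm:Gaussian-CLT-orthogonal}), and your argument — invoking Theorem~\ref{thm:Laguerre-CLT}(iii), killing the drift via the vanishing lemma so that $\bra{\xi,P_n}=\bra{\eta,P_n}$, then computing the covariance from~\eqref{L-Cov} by the scaling $P_n'(u,x)=u^n p_n(x/u)$ and orthogonality with respect to $\tilde\nu_{\alpha,c}$ — is exactly that parallel, with the covariance arithmetic checking out.
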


\begin{corollary}
Consider the  beta Laguerre ensemble~\eqref{LbE} in a regime where $\alpha > \frac12$ is fixed and $\beta N = 2c$. Then the following hold.
\begin{itemize}
	\item[\rm(i)] The empirical distribution $L_N$ converges weakly to $\nu_{\alpha, c}$ as $N \to \infty$ (in probability).
	
	\item[\rm(ii)] As $N \to \infty$,
	\[
		\sqrt N \Big(\bra{L_N, P_n} - \bra{\nu_{\alpha, c}, P_n} \Big) \dto \Normal(0, \sigma_{P_n}^2),
	\]
where 
\[
	\sigma_{P_n}^2  =  \frac{\alpha + c}{n+1} \prod_{i=1}^n (c+i) (\alpha + c + i).
\]
The limits of the joint convergence are independent Gaussian random variables.
\end{itemize}
\end{corollary}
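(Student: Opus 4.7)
The plan is to derive both parts as direct consequences of the process-level results, Theorem~\ref{thm:Laguerre-LLN} and Theorem~\ref{thm:Laguerre-CLT-orthogonal}, by specializing them to the single time $t = 1$. The bridge between the two viewpoints is the distributional identity $(\lambda_1(1),\dots,\lambda_N(1)) \overset{d}{=} (\lambda_1, \dots, \lambda_N)$ from the beta Laguerre ensemble~\eqref{LbE}, which yields $L_N \overset{d}{=} \mu_1^{(N)}$. Combined with $\mu_1 = \nu_{\alpha,c}$ (immediate from $\mu_t(dx) = t^{-1}\nu_{\alpha,c}(x/t)\,dx$) and the identity $P_n(1, x) = P_n(x)$, this reduces the corollary to evaluating the process-level theorems at $t = 1$.

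For part~(i), Theorem~\ref{thm:Laguerre-LLN} applied to $f(t,x) = x^k$ at $t = 1$ yields $\bra{\mu_1^{(N)}, x^k} \Pto \bra{\nu_{\alpha,c}, x^k}$ for every $k \in \N$. Because the density of $\nu_{\alpha,c}$ carries the exponential factor $e^{-x}$, the Stieltjes moment problem for $\nu_{\alpha,c}$ is determinate, and the convergence of all polynomial moments in probability therefore upgrades in the standard way to weak convergence of $L_N$ to $\nu_{\alpha,c}$ in probability on $[0,\infty)$.

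For part~(ii), setting $t = 1$ in Theorem~\ref{thm:Laguerre-CLT-orthogonal} gives the single-time joint statement
\[
\Big\{\sqrt N \big( \bra{L_N, P_n} - \bra{\nu_{\alpha,c}, P_n}\big)\Big\}_{n=0}^M \dto \{\tilde \eta_n(1)\}_{n=0}^M,
\]
where $\{\tilde\eta_n\}$ are the independent centered Gaussian processes of that theorem. Substituting $s = t = 1$ in the covariance formula provided there immediately produces the variance
\[
\sigma_{P_n}^2 = \frac{\alpha + c}{n+1} \prod_{i=1}^n (c + i)(\alpha + c + i),
\]
and independence of the marginals $\{\tilde\eta_n(1)\}_n$ is inherited from the independence of the limiting processes.

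The only non-clerical point is the moment-to-weak-convergence upgrade in part~(i); this will be the sole place where care is needed, but it follows from the exponential tail of $\nu_{\alpha,c}$ and is entirely standard. The rest is bookkeeping: substitute $t = 1$ into the two process-level theorems and read off the constants.
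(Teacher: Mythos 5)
Your proposal is correct and follows essentially the same route the paper intends: the corollary is obtained by specializing Theorems~\ref{thm:Laguerre-LLN} and~\ref{thm:Laguerre-CLT-orthogonal} to $t=1$, using that $\{\lambda_i(1)\}$ is distributed as the beta Laguerre ensemble, that $\mu_1=\nu_{\alpha,c}$ and $P_n(1,x)=P_n(x)$, and (for part~(i)) that $\nu_{\alpha,c}$ is determined by its moments so that convergence of all moments in probability upgrades to weak convergence in probability. This matches the paper's treatment, which omits the details precisely because they mirror the Gaussian case.
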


Note that by using a tridiagonal matrix model for beta Laguerre ensembles, the strong LLN (with the almost sure convergence) for $L_N$ and CLTs for continuously differentiable functions were established in \cite{Trinh-Trinh-2021}. 
We conclude this paper with the following remark.
\begin{remark}\label{rem:associated}We claim that if $\mu$ is the spectral measure of 
\[
J=	\begin{pmatrix}
		c_1\\
		d_1	& c_2\\
		&\ddots	&\ddots
	\end{pmatrix}
	\begin{pmatrix}
		c_1 & d_1	\\
		&c_2		&d_2\\
		&&\ddots	&\ddots
	\end{pmatrix}
	=\begin{pmatrix}
		c_1^2 	&c_1 d_1	\\
		c_1 d_1	&c_2^2 + d_1^2		&c_2d_2\\
		&\ddots	&\ddots	&\ddots
	\end{pmatrix},
\]
then $\nu = c_1^{-2} \times x \mu$ is the spectral measure of 
\[
		H=\begin{pmatrix}
		c_1^2 + d_1^2	&c_2 d_1	\\
		c_2 d_1	&c_2^2 + d_2^2		&c_3d_2\\
		&\ddots	&\ddots	&\ddots
	\end{pmatrix},
\]
provided that the spectral measure of $J$ (and of $H$) is unique.
Equivalently, given two Jacobi matrices $J$ and $H$ of the above form, the claim means that for any $n \ge 0$,
\begin{equation}\label{JH}
	c_1^2 H^{n}(1,1) = J^{n+1}(1,1).
\end{equation}

We roughly present an idea of proof. Since both sides of the relation~\eqref{JH} are polynomials of $\{c_i\}$ and $\{d_i\}$, we assume without loss of generality that the two sequences are bounded.
Consider an operator~~${}\hat{} \colon J \mapsto \hat J$
\[
J=	
\begin{pmatrix}
		c_1^2 	&c_1 d_1	\\
		c_1 d_1	&c_2^2 + d_1^2		&c_2d_2\\
		&\ddots	&\ddots	&\ddots
	\end{pmatrix}
\mapsto
	\hat J = \begin{pmatrix}
		0	&c_1\\
		c_1	&0	&d_1\\
		&d_1		&0	&c_2\\
		&&c_2	&0	&d_2\\
		&&&\ddots	&\ddots	&\ddots
	\end{pmatrix}.
\]
Then one may check the relation that $(\hat J)^{2n}(1,1) = J^n(1,1)$. In other words, if $\mu$ and $\hat \mu$ are the spectral measure of $J$ and $\hat J$, respectively,  then
\[
	\bra{\hat \mu, x^{2n}} = \bra{\mu, x^n},\quad \bra{\hat \mu, x^{2n+1}} = 0, \quad n = 0, 1, \dots.
\]
It follows that $S_{\hat \mu}(z) = z S_{\mu}(z^2)$, where $S_\nu(z)$ denotes the Stieltjes transform of a measure $\nu$, 
\[
	S_\nu(z) = \int \frac{\nu(dx)}{x - z}, \quad z \in \C \setminus \R.
\]
Formally, 
\[
	S_\nu(z) = - \sum_{n=0}^\infty \frac{\bra{\nu, x^n}}{z^{n+1}}
\]
is a generating function of the sequence of moments $\bra{\nu, x^n}$.

Consider Jacobi matrices 
\[
	K = \begin{pmatrix}
		d_0^2	&	d_0 c_1\\
		d_0 c_1	&c_1^2 + d_1^2	&d_1 c_2\\
		&d_1 c_2	&c_2^2 + d_2^2	&d_2 c_3	\\
		&&\ddots	&\ddots	&\ddots
	\end{pmatrix}, \hat K = \begin{pmatrix}
		0	&d_0\\
		d_0&0	&c_1\\
		&c_1	&0	&d_1\\
		&&d_1		&0	&c_2\\
		&&&c_2	&0	&d_2\\
		&&&&\ddots	&\ddots	&\ddots
	\end{pmatrix}.
\]
Then $\hat J$ is obtained from $\hat K$ by removing the top row and the left most column. Similarly, $H$ is obtained from $K$ in the same way. Let $\xi$ (resp.\ $\hat \xi$, $\mu$ and $\nu$) be the spectral measure of $K$ (resp.\ $\hat K$, $J$ and $H$). We need to show that $c_1^2 \nu = x \mu$. The following relations hold:
\begin{align*}
	&-\frac{1}{S_{\xi}(z)} = z - d_0^2 + d_0^2 c_1^2 S_{\nu}(z),\\
	&-\frac{1}{S_{\hat \xi}(z)} = z - 0 + d_0^2 S_{\hat \mu}(z),\\
	&S_{\hat \mu}(z) = z S_{\mu}(z^2), \\
	&S_{\hat \xi}(z) = z S_{\xi}(z^2).
\end{align*}
Here the first two relations are relations of the Stieltjes transform of the spectral measure of the original Jacobi matrix and that of the Jacobi matrix after removing  the top row and the left most column \cite[Theorem 3.2.4]{Simon}.
From which, we deduce that 
\[
	1 + z^2 S_\mu(z^2) = c_1^2 S_\nu(z^2),
\]
implying the desired relation~\eqref{JH}.

\end{remark}

\bigskip 
\noindent\textbf{Acknowledgements.}
The authors would like to thank the editor and the reviewers for valuable comments.
This work is supported by JSPS KAKENHI Grant Numbers 20K03659 (F.N.) and JP19K14547 (K.D.T.). Trinh Hoang Dung, ID VNU.2021.NCS.11, thanks The Development Foundation of Vietnam National University, Hanoi for sponsoring this research.

\begin{footnotesize}

%	\bibliographystyle{spmpsci}
%	\bibliography{bib}

\end{footnotesize}
\end{document}